\newtheorem{theorem}{Theorem}
\newtheorem{lemma}[theorem]{Lemma}
\newtheorem{corollary}[theorem]{Corollary}
\theoremstyle{exercise}
\theoremstyle{definition}
\newtheorem{definition}[theorem]{Definition}
\numberwithin{equation}{section}
\numberwithin{theorem}{section}
\newcommand{\intav}[1]{\mathchoice {\mathop{\vrule width 6pt height 3 pt depth  -2.5pt
\kern -8pt \intop}\nolimits_{\kern -6pt#1}} {\mathop{\vrule width
5pt height 3  pt depth -2.6pt \kern -6pt \intop}\nolimits_{#1}}
{\mathop{\vrule width 5pt height 3 pt depth -2.6pt \kern -6pt
\intop}\nolimits_{#1}} {\mathop{\vrule width 5pt height 3 pt depth
-2.6pt \kern -6pt \intop}\nolimits_{#1}}}
\newcommand{\intavl}[1]{\mathchoice {\mathop{\vrule width 6pt height 3 pt depth  -2.5pt
\kern -8pt \intop}\limits_{\kern -6pt#1}} {\mathop{\vrule width 5pt
height 3  pt depth -2.6pt \kern -6pt \intop}\nolimits_{#1}}
{\mathop{\vrule width 5pt height 3 pt depth -2.6pt \kern -6pt
\intop}\nolimits_{#1}} {\mathop{\vrule width 5pt height 3 pt depth
-2.6pt \kern -6pt \intop}\nolimits_{#1}}}
\newcommand{\mc}{\mathcal}
\newcommand{\HH}{\mc{H}}
\newcommand{\R}{\mathbb{R}}
\newcommand{\N}{\mathbb{N}}
\newcommand{\pp}{\mathbb{P}}
\renewcommand{\P}[1]{{\mathbb{P}}\left[{#1}\right]}
\newcommand{\Psub}[2]{{\mathbb{P}}_{#1}\left[{#2}\right]}
\newcommand{\Prob}[2]{{\mathrm{Prob}}_{#1}\left[{#2}\right]}
\newcommand{\E}[1]{{\mathbb{E}}\left[{#1}\right]}
\newcommand{\EE}[2]{{\mathbb{E}}_{#1}\left[{#2}\right]}
\newcommand{\ind}[1]{{\bf 1}_{\left\{{#1}\right\}}}
\newcommand{\htime}{\tau}
\DeclareMathOperator{\supp}{supp}
\begin{document}

\title[An Abramov formula for stationary spaces of discrete groups]{
  An Abramov formula for stationary spaces of discrete groups}

%    Information for first author
\author[Yair Hartman]{Yair Hartman}
%    Address of record for the research reported here
\address{Weizmann Institute of Science, Faculty of Mathematics and Computer Science, POB 26, 76100, Rehovot, Israel.}
\email{yair.hartman, yuri.lima, omer.tamuz@weizmann.ac.il}

%    Information for second author
\author[Yuri Lima]{Yuri Lima}

%    Information for third author
\author[Omer Tamuz]{Omer Tamuz}

\thanks{Yuri Lima is supported by the European Research Council, grant 239885.
Omer Tamuz is supported by ISF grant 1300/08, and is a
recipient of the Google Europe Fellowship in Social Computing, and
this research is supported in part by this Google Fellowship.}

\

%    General info
\subjclass[2010]{Primary: 37A50, 46L55, 60J50. Secondary: 60J05.}

\date{\today}

\keywords{Abramov formula, Furstenberg entropy, random walk, random walk entropy, stationary space.}

\begin{abstract}
  Let $(G,\mu)$ be a discrete group equipped with a generating
  probability measure, and let $\Gamma$ be a finite index subgroup of
  $G$. A $\mu$-random walk on $G$, starting from the identity, returns
  to $\Gamma$ with probability one. Let $\theta$ be the hitting
  measure, or the distribution of the position in which the random
  walk first hits $\Gamma$.

  We prove that the Furstenberg entropy of a $(G,\mu)$-stationary
  space, with respect to the induced action of $(\Gamma,\theta)$, is
  equal to the Furstenberg entropy with respect to the action of
  $(G,\mu)$, times the index of $\Gamma$ in $G$. The index is shown to
  be equal to the expected return time to $\Gamma$.

  As a corollary, when applied to the Furstenberg-Poisson boundary of
  $(G,\mu)$, we prove that the random walk entropy of
  $(\Gamma,\theta)$ is equal to the random walk entropy of $(G,\mu)$,
  times the index of $\Gamma$ in $G$.
\end{abstract}

\maketitle

\section{Introduction and statement of results}

Let $G$ be a countable, discrete group, equipped with a generating
probability measure $\mu$, and consider the $\mu$-random walk on $G$:
at each step, the increments are chosen independently according to the
measure $\mu$. We study recurrent subgroups: $\Gamma$ is a recurrent
subgroup of the pair $(G,\mu)$ if the $\mu$-random walk on $G$ almost
surely visits $\Gamma$ infinitely often.

A natural measure on a recurrent subgroup $\Gamma$ is the hitting
measure $\theta$, which is the distribution of the first element of
$\Gamma$ visited by a $\mu$-random walk on $G$. Our main result
answers a question of Furstenberg regarding the relation between the
$\mu$-random walk on $G$ and the corresponding $\theta$-random walk on
$\Gamma$, from the point of view of their stationary actions.

The main objects of study of this article are stationary spaces. A
probability space $(X,\nu)$ is $(G,\mu)$-stationary if it is a
measurable $G$-space, and the convolution $\mu*\nu$ is equal to
$\nu$. Essentially, the action of $G$ on $X$ leaves $\nu$ invariant
``on the average'', when this average is taken over $\mu$.

A $G$-space is in particular a
$\Gamma$-space. Furthermore, it is known that if $(X,\nu)$ is
$(G,\mu)$-stationary then it is also $(\Gamma,\theta)$-stationary.

The Furstenberg entropy of a $(G,\mu)$-stationary space $(X,\nu)$,
denoted by $h_\mu(X,\nu)$, measures the average deformation of $\nu$
under the action of $G$. We prove that the entropies of $(X,\nu$) with
respect to the actions of $G$ and of $\Gamma$ are easily related.

\begin{theorem}[Abramov formula for stationary spaces]\label{thm abramov}
  Let $G$ be a countable, discrete group with a generating probability
  measure $\mu$ with finite entropy. Let $\Gamma$ be a finite index
  subgroup of $G$, with hitting measure $\theta$. Then, for any
  $(G,\mu)$-stationary space $(X,\nu)$,
  \begin{align}
    \label{eq abramov intro}
    h_\theta(X,\nu)=[G:\Gamma]\cdot h_\mu(X,\nu).
  \end{align}
\end{theorem}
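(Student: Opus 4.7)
The plan is to recognize identity~\eqref{eq abramov intro} as an instance of optional stopping applied to a martingale built along the random walk: the Furstenberg entropy $\E{\int_X -\log(dZ_n^{-1}\nu/d\nu)\,d\nu}$ accumulated by time $n$ grows linearly at rate $h_\mu(X,\nu)$ per step, while $\Gamma$ is hit for the first time after on average $[G:\Gamma]$ steps.

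I would first record the cocycle identity that drives the computation. Setting $I_g(x) := -\log\frac{dg^{-1}\nu}{d\nu}(x)$, so that $h_\mu(X,\nu) = \sum_g \mu(g)\int_X I_g\,d\nu$, the chain rule for Radon--Nikodym derivatives together with a change of variables yields
\begin{equation*}
I_{gh}(x) = I_g(hx) + I_h(x) \qquad (g,h\in G,\ \nu\text{-a.e.\ }x\in X).
\end{equation*}
Introduce the right random walk $Z_0 := e$, $Z_{n+1} := Z_n g_{n+1}$, with $g_i\sim\mu$ independent, and set $M_n := \int_X I_{Z_n}\,d\nu$ with $M_0 = 0$. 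Applying the cocycle to $Z_{n+1} = Z_n g_{n+1}$ and integrating produces
\begin{equation*}
M_{n+1} - M_n = \int_X I_{Z_n}(g_{n+1}x)\,d\nu(x) - \int_X I_{Z_n}(x)\,d\nu(x) + \int_X I_{g_{n+1}}(x)\,d\nu(x).
\end{equation*}
Taking $\E{\,\cdot\mid\mathcal{F}_n}$ with $\mathcal{F}_n := \sigma(g_1,\ldots,g_n)$, the first two terms cancel by $\mu$-stationarity (which rephrases as $\sum_g \mu(g)\int f(gx)\,d\nu(x) = \int f\,d\nu$), while the third contributes $h_\mu(X,\nu)$. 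Hence $M_n - n\,h_\mu(X,\nu)$ is an $(\mathcal{F}_n)$-martingale.

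Now let $\tau := \min\{n\ge 1 : Z_n\in\Gamma\}$. By definition $\theta$ is the law of $Z_\tau$, so $h_\theta(X,\nu) = \E{M_\tau}$. To evaluate $\E{\tau}$, I would project the walk to the finite set $\Gamma\backslash G$ of right cosets: the map $Z_n \mapsto \Gamma Z_n$ is a Markov chain, because $\Gamma Z_n g_{n+1}$ depends only on the coset $\Gamma Z_n$ and on $g_{n+1}$. The counting measure on $G$ is invariant under the right random walk and projects to the uniform measure on $\Gamma\backslash G$, so Kac's lemma gives $\E{\tau} = [G:\Gamma]$. Combined with the optional-stopping identity $\E{M_\tau} = \E{\tau}\cdot h_\mu(X,\nu)$, this yields~\eqref{eq abramov intro}.

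The main technical point I foresee is the justification of optional stopping, since the increments $M_{n+1}-M_n$ are signed and a priori unbounded. I would proceed via the Wald-style representation
\begin{equation*}
\E{M_\tau} = \sum_{n\ge 1}\E{\ind{\tau\ge n}(M_n - M_{n-1})} = h_\mu(X,\nu)\sum_{n\ge 1}\P{\tau\ge n},
\end{equation*}
where the second equality uses the predictability $\{\tau\ge n\}\in\mathcal{F}_{n-1}$ together with $\E{M_n - M_{n-1}\mid\mathcal{F}_{n-1}} = h_\mu(X,\nu)$. The exchange of sum and expectation should be legitimate by combining the finite-entropy hypothesis $H(\mu)<\infty$ (which yields $L^1$-control on each $|M_n - M_{n-1}|$) with the geometric decay of $\P{\tau\ge n}$ coming from finiteness and irreducibility of the induced chain on $\Gamma\backslash G$.
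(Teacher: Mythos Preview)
Your strategy coincides with the paper's: build the martingale $\varphi(Z_n)-n\,h_\mu(X,\nu)$ from the Radon--Nikodym cocycle, identify $h_\theta(X,\nu)=\E{\varphi(Z_\tau)}$, compute $\E{\tau}=[G:\Gamma]$ via Kac's lemma on the coset chain $\Gamma\backslash G$, and then justify optional stopping. The paper follows exactly this outline and even remarks that when the martingale or its increments are bounded the result is immediate.

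The gap is in your final paragraph. Your proposed justification --- ``$L^1$-control on each $|M_n-M_{n-1}|$'' combined with ``geometric decay of $\P{\tau\ge n}$'' --- does not close the argument, because the event $\{\tau\ge n\}$ and the increment $M_n-M_{n-1}$ are correlated (both are measurable with respect to $X_1,\dots,X_n$), so one cannot simply multiply an $L^1$ bound for the increment by the tail probability. The paper notes explicitly that when $\mu$ has full support and $h_\mu\neq 0$ the increments are unbounded, and instead controls the error term $R_n=\E{\varphi(Z_n)\,\ind{\tau>n}}$ via two ingredients you are missing. First, a pointwise bound $\varphi(g_1\cdots g_n)\le -\sum_{k=1}^n\log\mu(g_k)$, obtained from $\mu^n$-stationarity of $\nu$ and the cocycle relation; this replaces the uncontrolled quantity $\varphi(Z_n)$ by an i.i.d.\ sum. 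Second, a \emph{conditional} exponential decay $\P{\tau>n\mid X_k=g}\le e^{-Cn}$ uniform in $k$ and $g$, proved by splitting at time $n/2$ and using irreducibility of the finite coset chain. Together these give $R_n\le n\,e^{-Cn}H(\mu)\to 0$. The unconditional decay of $\P{\tau\ge n}$ that you invoke is not enough; it is precisely the uniformity of the decay after conditioning on a single step $X_k$ that decouples the tail event from the entropy increments.
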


In classical ergodic theory, if $T:(X,\nu)\rightarrow (X,\nu)$ is a
measure-preserving transformation and $A\subset X$ has positive
measure, one can define a measure-preserving induced transformation on
$A$ as the first return of $x\in A$ to $A$, under repeated
applications of $T$.  Abramov related in~\cite{abramov1966entropy} the
entropy of $T$ with the entropy of the induced map.  More
specifically, he proved that the entropy of the induced map is equal
to $1/\nu(A)$ times the entropy of the initial map, thus relating the
ratio of entropies to the fraction of $X$ occupied by $A$. If one
thinks of the index $[G:\Gamma]$ as the portion that $\Gamma$ occupies
in $G$, then Theorem~\ref{thm abramov} makes the analogous
statement. We therefore call Theorem~\ref{thm abramov} an {\it Abramov
  formula}.

An ingredient of the proof of Theorem~\ref{thm abramov} is the
following result, which states that the expected return time of a
$\mu$-random walk to $\Gamma$ is equal to its index in $G$. In
particular, the expected return time is independent of the measure
$\mu$. This holds in the more general setting of topological groups
and open subgroups.

\begin{theorem}[Kac formula for subgroups]\label{thm kac}
  Let $G$ be a second countable topological group with a generating
  probability measure $\mu$, and let $\Gamma$ be an open, recurrent
  subgroup of $(G,\mu)$. Then the expected return time $\E{\htime}$ of
  the $\mu$-random walk to $\Gamma$ is equal to the index of $\Gamma$
  in $G$:
  \begin{align*}
    \E{\htime} = [G:\Gamma].
  \end{align*}
\end{theorem}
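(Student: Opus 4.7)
The strategy is to descend the $\mu$-random walk to the space of right cosets $\Gamma\backslash G$ and invoke the classical Kac formula for Markov chains there.

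Since $\Gamma$ is open, each right coset $\Gamma g$ is a Borel subset of $G$, and one can set
\[
P(\Gamma g,\Gamma g'):=\mu(g^{-1}\Gamma g').
\]
This is independent of the choice of representative $g$, because $\gamma^{-1}\Gamma=\Gamma$ for every $\gamma\in\Gamma$. The projection $g\mapsto\Gamma g$ applied to the random walk therefore yields a Markov chain on $\Gamma\backslash G$ with transition kernel $P$ started at the coset $\Gamma$, and the first return time $\htime$ of the walk to $\Gamma$ in $G$ agrees with the first return time of the projected chain to the state $\Gamma$.

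The central computation is that the counting measure on $\Gamma\backslash G$ (giving each coset mass $1$) is stationary. Picking a set $\{g_i\}_{i\in I}$ of representatives of the right cosets, the inverses $\{g_i^{-1}\}$ form a set of representatives of the left cosets $G/\Gamma$, so the sets $g_i^{-1}\Gamma$ partition $G$. Right-multiplying by any $g'$ preserves this partition, hence
\[
\sum_i P(\Gamma g_i,\Gamma g')=\sum_i\mu(g_i^{-1}\Gamma g')=\mu(G)=1
\]
for every target coset $\Gamma g'$, which is the stationarity condition. Because $\mu$ generates $G$ the projected chain is irreducible, and because $\Gamma$ is a recurrent subgroup the state $\Gamma$ is recurrent, hence every state is recurrent.

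It remains to invoke the classical Kac formula: for an irreducible recurrent Markov chain with invariant measure $\pi$, the expected return time from state $s$ equals the total mass of $\pi$ divided by $\pi(s)$. Applied with $\pi$ the counting measure on $\Gamma\backslash G$, this gives $\E{\htime}=[G:\Gamma]$, with both sides infinite in the null-recurrent case $[G:\Gamma]=\infty$. The main point that requires care is the stationarity computation, since the interchange between the right cosets that label the states and the left cosets $g_i^{-1}\Gamma$ that partition $G$ is easy to mishandle; the topological and measurability aspects enter only to make the induced chain well defined on $\Gamma\backslash G$ and require no substantive work.
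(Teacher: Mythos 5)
Your proof is correct and takes essentially the same route as the paper: project the walk to $\Gamma\backslash G$, check that the induced chain is irreducible, recurrent and doubly stochastic (so that counting measure is invariant), and apply the classical Kac formula. The only cosmetic difference is that the paper handles the infinite-index case by noting that the constant invariant measure cannot be normalized, forcing null recurrence, whereas you invoke Kac's formula in its total-mass-of-the-invariant-measure form, which covers both cases at once.
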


We call Theorem~\ref{thm kac} a {\it Kac formula} in another analogy
with classical ergodic theory: there, Kac's formula states that the
expected return time of $x \in A$ to $A$, under repeated applications
of $T$, is equal to $1/\nu(A)$.  We indeed use the classical Kac
formula to establish this theorem.

Finally, we apply Theorem~\ref{thm abramov} to the special case that
$(X,\nu)$ is the Furstenberg-Poisson boundary of $(G,\mu)$, which is
naturally isomorphic to the Furstenberg-Poisson boundary of
$(\Gamma,\theta)$.  Using the Kaimanovich-Vershik entropy
characterization of the Furstenberg-Poisson
boundary~\cite{kaimanovich1983random}, we get that $h(\Gamma,\theta)$,
the random walk entropy of $(\Gamma,\theta)$, is equal to the index of
$\Gamma$ in $G$ times $h(G,\mu)$, the random walk entropy of
$(G,\mu)$.

\begin{corollary}\label{corollary}
  Let $G$ be a countable, discrete group with a generating probability
  measure $\mu$ with finite entropy. Let $\Gamma$ be a finite index
  subgroup of $G$, with hitting measure $\theta$. Then
  \begin{align}
    \label{eq kac intro}
    h(\Gamma,\theta)=[G:\Gamma] \cdot h(G,\mu).
  \end{align}
\end{corollary}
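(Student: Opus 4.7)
The plan is to apply Theorem \ref{thm abramov} to the Furstenberg-Poisson boundary of $(G,\mu)$ and then transfer the resulting identity from Furstenberg entropy to random walk entropy via the Kaimanovich-Vershik characterization.

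Let $(B,\nu)$ denote the Furstenberg-Poisson boundary of $(G,\mu)$. As recalled in the excerpt, $(B,\nu)$ is naturally isomorphic to the Furstenberg-Poisson boundary of $(\Gamma,\theta)$: the subsequence $(X_{\htime_n})_n$ of successive first returns of the $\mu$-walk to $\Gamma$ is a $\theta$-random walk on $\Gamma$, and recurrence of $\Gamma$ ensures that the two walks share the same tail $\sigma$-algebra. The Kaimanovich-Vershik theorem~\cite{kaimanovich1983random} identifies random walk entropy with the Furstenberg entropy of the Poisson boundary whenever the step distribution has finite Shannon entropy, so that $h(G,\mu) = h_\mu(B,\nu)$ and $h(\Gamma,\theta) = h_\theta(B,\nu)$. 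Applying Theorem \ref{thm abramov} to the $(G,\mu)$-stationary space $(B,\nu)$ then yields
\[
  h(\Gamma,\theta) \;=\; h_\theta(B,\nu) \;=\; [G:\Gamma]\cdot h_\mu(B,\nu) \;=\; [G:\Gamma]\cdot h(G,\mu),
\]
which is the claimed identity.

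The only nontrivial point to verify is that $H(\theta) < \infty$, which is needed to invoke Kaimanovich-Vershik on the $\Gamma$-side. Since $[G:\Gamma] < \infty$, the $\mu$-walk projects to an irreducible Markov chain on the finite coset space $G/\Gamma$, so $\htime$ has geometric tails; in particular $H(\htime) < \infty$, while $\E{\htime} = [G:\Gamma] < \infty$ by Theorem \ref{thm kac}. Writing the hitting element as the $\mu$-product of $\htime$ random increments, conditioning on $\htime$, and invoking subadditivity of Shannon entropy then yields a bound of the shape $H(\theta) \leq H(\htime) + [G:\Gamma]\cdot H(\mu) < \infty$. I expect this finite-entropy check to be the only real technical point; the rest of the argument is a direct assembly of Theorem \ref{thm abramov} with standard boundary theory.
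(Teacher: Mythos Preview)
Your approach is essentially identical to the paper's: apply Theorem~\ref{thm abramov} to the Furstenberg--Poisson boundary, identify it (via Corollary~\ref{corollary poisson boundary} in the paper, via the shared tail $\sigma$-algebra in your sketch) as also being the boundary of $(\Gamma,\theta)$, and invoke Kaimanovich--Vershik on both sides. You additionally flag the need for $H(\theta)<\infty$, which the paper leaves implicit; the cleanest justification is $H(\theta)\le H\bigl((X_1,\ldots,X_\htime)\bigr)=\E{\htime}\cdot H(\mu)$ by Wald's identity applied to the i.i.d.\ summands $-\log\mu(X_i)$, so your stated bound is in fact valid without the extra $H(\htime)$ term.
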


Note that by Theorem~\ref{thm kac} we could write Eq.~\ref{eq kac
  intro} as
\begin{align*}
  h(\Gamma,\theta)=\E{\htime} \cdot h(G,\mu).
\end{align*}
Likewise, Eq.~\ref{eq abramov intro} can be written as
\begin{align*}
  h_\theta(X,\nu)=\E{\htime}\cdot h_\mu(X,\nu).
\end{align*}

The paper is organized as follows. In Section~\ref{section
  preliminaries} we introduce the basic notations and definitions as
well as the necessary background for the sequel. Section~\ref{section
  hitting time=index} is devoted to the proof of Theorem~\ref{thm
  kac}, and Section~\ref{section abramov formula} to the proof of
Theorem \ref{thm abramov}. To this matter, Appendix~\ref{appendix
  probabilistic lemma} treats the required results of Markov chains,
adapted to our context.

\section{Preliminaries}
\label{section preliminaries}

The following definitions are mostly standard. The notation is adapted
from Furman~\cite{furman2002random}, who provides an excellent
exposition to random walks on groups and Furstenberg-Poisson boundary
theory.

\subsection{Random walks on groups and random walk entropy}
\label{sub random walks}

Let $G$ be a countable, discrete group with identity $e$, and let
$\mu$ be a {\it generating} probability measure on $G$, so that the
semigroup generated by its support,
\begin{align*}
  \supp(\mu) = \{g\in G\,;\,\mu(g)>0\},
\end{align*}
is equal to the whole of $G$. Let $X_1,X_2,\ldots$ be a sequence of
$G$-valued independent random variables each with law $\mu$.  The
Markov chain $\{Z_n\}_{n=1}^\infty$ defined as
\begin{align*}
  Z_n = X_1X_2 \cdots X_n
\end{align*}
is called the {\it $\mu$-random walk} on $G$.  Formally, if we let
$(\Omega,\pp)=(G,\mu)^\N$, then
\begin{equation*}
  \begin{array}{rcrcl}
    X_n&:&(\Omega,\pp)&\longrightarrow &G\\
    & & \omega&\longmapsto     &X_n(\omega)=\omega_n,
\end{array}
\end{equation*}
and
\begin{equation*}
  \begin{array}{rcrcl}
    Z_n&:&(\Omega,\pp)&\longrightarrow &G\\
    & & \omega&\longmapsto     &Z_n(\omega)=X_1(\omega)\cdots X_n(\omega).
\end{array}
\end{equation*}
We will occasionally consider random walks starting from some $g \in
G$, in which case $Z_n = gX_1X_2 \cdots X_n$.

Let $\mu$ be a probability measure on $G$. The standard definition for
the {\it entropy} of $\mu$ is
\begin{align*}
  H(\mu)=-\sum_{g\in G}\mu(g)\cdot\log{\mu(g)},
\end{align*}
where $0\,\cdot\,\log 0=0$.  Throughout this work, we only consider
finite entropy probability measures, i.e., measures for which $H$ is
finite. As Kaimanovich and Vershik point out, measures with infinite
entropy are somewhat different in nature (see the remark on page 465
in~\cite{kaimanovich1983random}).

Let $\mu_1$ and $\mu_2$ be probability measures on $G$. The {\it
  convolution} of $\mu_1$ and $\mu_2$ is defined by
\begin{align*}
  [\mu_1*\mu_2](g) = \sum_{g' \in G}\mu_1(g')\mu_2(g'^{-1}g).
\end{align*}
Equivalently, $\mu_1*\mu_2$ is the push-forward of the product measure
$\mu_1\times\mu_2$, under the product map $(g_1,g_2)\mapsto g_1g_2$
from $G\times G$ to $G$. By standard information theoretical
inequalities it holds that
\begin{align*}
  H(\mu_1*\mu_2) \le H(\mu_1)+H(\mu_2).
\end{align*}
In particular, for a probability measure $\mu$ on $G$, if $\mu^n$
denotes the $n$th convolution of $\mu$ with itself, the sequence
$H(\mu^n)/n$ converges, and so we can consider the following
definition of random walk entropy~\cite{avez1972entropie}.
\begin{definition}
  The {\it random walk entropy}, also known as {\it Avez entropy} or
  {\it asymptotic entropy}, of the pair $(G,\mu)$ is
  \begin{align*}
    h(G,\mu) = \lim_{n\rightarrow\infty}\dfrac{1}{n}H(\mu^n).
  \end{align*}
\end{definition}
The measure $\mu^n$ is the distribution of the position of a
$\mu$-random walk in its $n$th step.  In a sense, $h(G,\mu)$ measures
the rate of escape to infinity of a $\mu$-random walk on $G$.

In this article, we are interested in the relation between the
$\mu$-random walk on $G$ and its induced random walk on a recurrent
subgroup.
\begin{definition}
  A subgroup $\Gamma$ of $G$ is {\it $\mu$-recurrent} if, for
  $\pp$-almost every $\omega\in\Omega$, there exists an $n \geq 1$
  ($\Leftrightarrow$ infinitely many values of $n$) such that
  $Z_n(\omega)\in\Gamma$.
\end{definition}
The recurrence property is equivalent to the existence of a {\it
  return time map} $\htime:\Omega\rightarrow\N$ defined $\pp$-almost
everywhere by
\begin{align*}
  \htime(\omega)=\min\{n \ge 1\,;\,Z_n(\omega)\in\Gamma\}
\end{align*}
and thus of a {\it hitting map}
\begin{equation*}
\begin{array}{ccrcl}
\Phi&:&\Omega&\longrightarrow &\Gamma\\
    & &\omega&\longmapsto     &Z_{\htime(\omega)}(\omega).
\end{array}
\end{equation*}
The random variable $\Phi$ is the first element of $\Gamma$ hit by the
random walk.  A useful related definition is that of avoiding sets
$A_n$.
\begin{definition}
  \label{def avoiding}
  Let $\Gamma$ be a subgroup of $G$. The $n$th {\it avoiding set}
  $A_n$ is
  \begin{equation}\label{eq 6}
    A_n=\left\{(g_1,\ldots,g_n)\in G^n\,;\,g_1\cdots
      g_k\not\in\Gamma\text{ for }k \leq n\right\}.
  \end{equation}
\end{definition}
Equivalently, $A_n$ is the set of all length $n$ walks on $G$ that do
not hit $\Gamma$. Evidently,
\begin{align*}
  \htime > n\quad\Leftrightarrow\quad(X_1,\ldots,X_n) \in A_n.
\end{align*}

Let $\Gamma$ be a $\mu$-recurrent subgroup of $G$. The distribution of
$\Phi$ defines a natural probability measure on $\Gamma$, called the
hitting measure.
\begin{definition}
  Let $\Gamma$ be a $\mu$-recurrent subgroup of $G$. The {\it hitting
    measure} of $\Gamma$ is the probability measure $\theta$ on
  $\Gamma$ defined by
  \begin{align*}
    \theta(\gamma) = \P{\Phi=\gamma}\,,\ \ \gamma \in \Gamma.
  \end{align*}
\end{definition}
Equivalently, $\theta$ is equal to the push-forward of $\pp$ under
$\Phi$. Another description, which will be useful for our purposes, is
the following: for each $n\ge 1$ define $\theta^{(n)}(\gamma)$ to be
the probability that a $\mu$-random walk on $G$ will hit $\Gamma$
first at step $n$, exactly at $\gamma\in\Gamma$:
\begin{align*}
  \theta^{(n)}(\gamma) = \P{(X_1,\ldots,X_{n-1}) \in A_{n-1}\mbox{ and
    } Z_n=\gamma}.
\end{align*}
Note that $\theta^{(n)}$ is not necessarily a probability measure, and
that
\begin{align*}
  \theta = \sum_{n\in\mathbb{N}} \theta^{(n)}.
\end{align*}

With respect to the hitting time map $\htime$, one can divide
recurrent subgroups into two classes.
\begin{definition}
The {\it expected return time} to $\Gamma$ is equal to
\begin{align*}
  \E{\htime}=\int_{\Omega}\htime(\omega)d\pp(\omega).
\end{align*}
$\Gamma$ is called {\it positive recurrent} if $\E{\htime}<\infty$ and
{\it null recurrent} if $\E{\htime}=\infty$.
\end{definition}

\subsection{Stationary spaces and Furstenberg entropy}\label{sub stationary spaces}

Let $X$ be a {\it measurable $G$-space}. By this we mean that $G$ acts on $X$
respecting the group operation of $G$, and that the action map
\begin{equation*}
\begin{array}{rcl}
G\times X&\longrightarrow &X\\
(g,x)    &\longmapsto     &gx
\end{array}
\end{equation*}
is measurable. Let $\nu$ be a probability measure on $X$.
For $g \in G$ we denote by $g\nu$ the probability measure on $X$ defined by
\begin{align*}
  g\nu(A) = \nu(g^{-1}A),
\end{align*}
where $A$ is a measurable subset of $X$.

The {\it convolution} of the measures $\mu$ on $G$ and $\nu$ on $X$,
denoted by $\mu*\nu$, is the probability measure on $X$ defined as the
image of the product measure $\mu\times\nu$ under the above action
map. Equivalently,
\begin{align*}
  \mu * \nu = \sum_{g \in G} \mu(g) \cdot g\nu.
\end{align*}

\begin{definition}
  A probability measure $\nu$ on a $G$-space $X$ is called {\it
    $\mu$-stationary} if $\mu*\nu=\nu$.  In this case, $(X,\nu)$ is
  called a {\it $(G,\mu)$-stationary space}.
\end{definition}

This can be interpreted as saying that $\nu$ is invariant ``on the
average''.  Every such stationary action preserves the measure class
of $\nu$, that is, $\nu(A)=0$ if and only if $g\nu(A)=0$ for every
$g\in G$ and measurable $A \subset X$ (see, e.g., Nevo and
Zimmer~\cite{nevozimmer1999}). In this case, the {\it Radon-Nikodym
  cocycle} $\rho:G\times X\rightarrow\R$ is defined as
\begin{equation}\label{eq 3}
  \rho(g,x)=-\log\dfrac{dg^{-1}\nu}{d\nu}(x).
\end{equation}
Note that $\rho$ indeed satisfies the additive cocycle relation:
\begin{equation}\label{eq 4}
  \rho(gg_1,x)=\rho(g,g_1x)+\rho(g_1,x).
\end{equation}
Define $\varphi:G\rightarrow\R$ by
\begin{equation}\label{eq 5}
  \varphi(g)=\int_X\rho(g,x)d\nu(x)
\end{equation}
which, by Jensen's inequality, is nonnegative and equal to zero if and
only if $g^{-1}\nu=\nu$.  Unlike the measure-preserving
transformations of classical ergodic theory, here each $g\in G$ may
deform $\nu$, and this deformation is quantified by $\varphi(g)$. In
terms of information theory, $\varphi(g)$ is the {\it Kullback-Leibler
  divergence} $D_{KL}(g\nu||\nu)$ between the measures $g\nu$ and
$\nu$.  The average deformation is measured as follows.

\begin{definition}
  The {\it Furstenberg entropy} of a $(G,\mu)$-stationary space
  $(X,\nu)$ is
  \begin{align*}
    h_\mu(X,\nu)=\sum_{g\in G}\mu(g)\cdot\varphi(g)=\sum_{g\in G}\mu(g)\cdot\int_X-\log\dfrac{dg^{-1}\nu}{d\nu}d\nu(x)d\mu(g).
  \end{align*}
\end{definition}

The Furstenberg entropy and the random walk entropy relate to each
other in the following way.

\begin{theorem}[Kaimanovich and Vershik~\cite{kaimanovich1983random}]\label{thm compare entropies}
  If $(X,\nu)$ is a $(G,\mu)$-stationary space, then
  \begin{align*}
    h_\mu(X,\nu)\le h(G,\mu),
  \end{align*}
  with equality if $(X,\nu)$ is the Furstenberg-Poisson boundary of
  $(G,\mu)$.
\end{theorem}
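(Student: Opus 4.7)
My plan is to handle the two assertions separately: first the general inequality $h_\mu(X,\nu)\le h(G,\mu)$, then the equality when $(X,\nu)$ is the Furstenberg-Poisson boundary. The backbone for both is the iterated stationarity $\mu^n*\nu=\nu$ combined with the cocycle relation~(\ref{eq 4}).

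For the inequality, I first upgrade the one-step definition to the $n$-step identity $h_{\mu^n}(X,\nu)=n\cdot h_\mu(X,\nu)$. Writing $\rho(g_1\cdots g_n,x)=\sum_{k=1}^n\rho(g_k,g_{k+1}\cdots g_n\,x)$, integrating the $x$-variable against $\nu$ and the $g_i$'s i.i.d.\ against $\mu$, each of the $n$ summands contributes $h_\mu(X,\nu)$, since integrating out $g_{k+1},\ldots,g_n$ first turns $g_{k+1}\cdots g_n x$ into a $\nu$-distributed sample by stationarity. A short change of variables identifies $\varphi(g)=D_{KL}(g\nu||\nu)$, so
\begin{equation*}
  h_{\mu^n}(X,\nu)=\sum_{g\in G}\mu^n(g)\,D_{KL}(g\nu||\nu)=I(G_n;Y_n),
\end{equation*}
where $G_n\sim\mu^n$ is independent of $X\sim\nu$ and $Y_n:=G_n X$; stationarity of $\mu^n$ forces $Y_n\sim\nu$, so the right-hand side is genuinely a mutual information. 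Since mutual information is bounded by the entropy of either coordinate, $I(G_n;Y_n)\le H(\mu^n)$; dividing by $n$ and passing to the limit yields $h_\mu(X,\nu)\le h(G,\mu)$.

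For the equality when $(X,\nu)=(B,\nu_B)$ is the Furstenberg-Poisson boundary, I invoke Kaimanovich-Vershik's entropic characterization of the maximal $\mu$-boundary. The walk converges $\pp$-a.s.\ to a random limit $\xi\in B$ with distribution $\nu_B$, and on sample paths a Shannon-McMillan-Breiman-type argument controls the two key quantities: $-\tfrac{1}{n}\log\mu^n(Z_n)\to h(G,\mu)$, while the cocycle identity together with the martingale structure of $\rho(Z_n,\cdot)$ and the convergence $Z_n\nu_B\to\delta_\xi$ yields $-\tfrac{1}{n}\log\frac{dZ_n^{-1}\nu_B}{d\nu_B}(\xi)\to h_\mu(B,\nu_B)$. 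The defining entropic property of the Poisson boundary, $\tfrac{1}{n}H(Z_n\mid\xi)\to 0$, identifies these two limits; combined with the inequality direction, the equality follows.

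The main obstacle is the equality direction. The inequality is essentially a free consequence of the mutual-information reformulation, but the Poisson-boundary equality requires genuine dynamical input: the almost-sure convergence of $Z_n$ to a boundary point, Shannon-McMillan-Breiman control of both $\mu^n(Z_n)$ and the Radon-Nikodym cocycle along sample paths, and Kaimanovich-Vershik's identification of the maximal boundary via vanishing of $\tfrac{1}{n}H(Z_n\mid\xi)$. These ingredients constitute the technical heart of the argument and are the reason the result is stated here as a black box rather than reproved in full.
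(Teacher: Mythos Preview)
The paper does not supply a proof of this theorem: it is stated with a citation to Kaimanovich and Vershik~\cite{kaimanovich1983random} and used as a black box in the derivation of Corollary~\ref{corollary}. So there is no ``paper's own proof'' to compare against.

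That said, your sketch is sound. The inequality direction is complete and correct: the cocycle relation~(\ref{eq 4}) and stationarity give $h_{\mu^n}(X,\nu)=n\,h_\mu(X,\nu)$, your identification $h_{\mu^n}(X,\nu)=I(G_n;Y_n)$ with $G_n\sim\mu^n$, $Y_n=G_nX$ (and $Y_n\sim\nu$ by $\mu^n$-stationarity) is valid, and the bound $I(G_n;Y_n)\le H(\mu^n)$ together with $H(\mu^n)/n\to h(G,\mu)$ finishes it. This is essentially the argument in~\cite{kaimanovich1983random}. For the equality you correctly flag that the genuine content is the Kaimanovich--Vershik entropic characterization of the Poisson boundary (vanishing of $\tfrac{1}{n}H(Z_n\mid\xi)$), and you treat it as a black box, which matches how the present paper uses the result. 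Your final paragraph is an honest assessment: the equality is not a formality, and a full proof would reproduce a nontrivial portion of~\cite{kaimanovich1983random}.
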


The Furstenberg-Poisson boundary of $(G,\mu)$ is introduced in the
next section.

\subsection{Furstenberg-Poisson boundary}\label{sub furstenberg-poisson boundary}

\begin{definition} A function $h:G\rightarrow\R$ is {\it
    $\mu$-harmonic} if it satisfies the $\mu$-mean value property
  \begin{align*}
    h(g)=\sum_{g_1 \in G}\mu(g_1) \cdot h(gg_1)
  \end{align*}
  for all $g \in G$.
\end{definition}
We will call a function {\it harmonic}, without explicit reference to
the measure, whenever the measure is obvious from the context.

Let $\HH^\infty(G,\mu)$ be the space of all bounded harmonic functions
on $G$ and $L^\infty(X,\nu)$ be the space of bounded functions on $X$,
with respect to the measure class of $\nu$. When $(X,\nu)$ is
$(G,\mu$)-stationary, to each $f \in L^\infty(X,\nu)$ we associate the
bounded harmonic function $F_\mu(f)\in\HH^\infty(G,\mu)$ given by
\begin{equation}\label{eq 1}
  [F_\mu(f)](g)=g\nu(f)=\int_X f(gx)d\nu(x)\,,\ \ \ g\in G.
\end{equation}
This defines a linear map $F_\mu$ from $L^\infty(X,\nu)$ to
$\HH^\infty(G,\mu)$, called the {\it Furstenberg transform}. The next
lemma shows that the image of $F_\mu$ is indeed in
$\HH^\infty(G,\mu)$, and that this condition is equivalent to the
$(G,\mu)$-stationarity of $(X,\nu)$.

\begin{lemma}\label{lemma stationarity}
  Let $(X,\nu)$ be a $G$-space and the Furstenberg transform map
  $F_\mu:L^\infty(X,\nu)\rightarrow L^\infty(G,\mu)$ defined as in
  Eq.~\ref{eq 1}. Then
  $F_\mu(L^\infty(X,\nu))\subset\HH^\infty(G,\mu)$ if and only if
  $(X,\nu)$ is $(G,\mu)$-stationary.
\end{lemma}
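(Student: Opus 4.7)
The plan is to directly unpack both sides of the $\mu$-harmonicity identity using the definitions of $F_\mu$ and of convolution, and show that harmonicity of $F_\mu(f)$ for every $f\in L^\infty(X,\nu)$ reduces to the identity $\mu*\nu=\nu$ (tested against bounded functions).

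For the forward direction, I would start by fixing $f\in L^\infty(X,\nu)$ and $g\in G$, and writing out the mean-value property applied to $F_\mu(f)$:
\begin{align*}
  [F_\mu(f)](g) = \sum_{g_1\in G}\mu(g_1)\cdot[F_\mu(f)](gg_1)
  = \sum_{g_1\in G}\mu(g_1)\int_X f(gg_1 x)\,d\nu(x).
\end{align*}
Because $f$ is bounded and $\mu$ is a probability measure, Fubini lets me interchange the sum and integral. Recognizing $\sum_{g_1}\mu(g_1)\cdot g_1\nu$ as $\mu*\nu$, the right-hand side becomes $\int_X f(gy)\,d[\mu*\nu](y)$, while the left-hand side is $\int_X f(gx)\,d\nu(x)$. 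Specializing to $g=e$ gives $\int f\,d\nu=\int f\,d(\mu*\nu)$ for every $f\in L^\infty(X,\nu)$, which forces $\mu*\nu=\nu$, i.e., $(X,\nu)$ is $(G,\mu)$-stationary.

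For the converse, if $\mu*\nu=\nu$, then for any $g\in G$ one has $g(\mu*\nu)=g\nu$, and reversing the computation above shows immediately that $[F_\mu(f)](g)=\sum_{g_1}\mu(g_1)[F_\mu(f)](gg_1)$, so $F_\mu(f)\in\HH^\infty(G,\mu)$. The bound $\|F_\mu(f)\|_\infty\le\|f\|_\infty$ is immediate from the fact that each $g\nu$ is a probability measure, so the image does land in $L^\infty(G,\mu)$ to begin with.

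There is no real obstacle here; the only point requiring mild care is the interchange of the countable sum over $G$ with the integral over $X$, which is justified by boundedness of $f$ and finiteness of the total mass of $\mu\times\nu$. The content of the lemma is essentially the observation that the mean-value property for $F_\mu(f)$ at the identity, tested against all bounded $f$, is precisely the defining equation of stationarity.
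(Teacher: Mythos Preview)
Your proof is correct and follows essentially the same approach as the paper: both arguments unpack the mean-value property of $F_\mu(f)$ at the identity and recognize it as the equation $\mu*\nu=\nu$ tested against bounded $f$. The only cosmetic difference is that the paper proves the converse by contrapositive (non-harmonicity at some $g_0$, reduced WLOG to $g_0=e$, implies $\mu*\nu\neq\nu$), while you argue the converse directly for general $g$; the content is the same.
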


\begin{proof}
  First assume that
  $F_\mu(L^\infty(X,\nu))\subset\HH^\infty(G,\mu)$. Let $f\in
  L^\infty(X,\nu)$ and $h=F_\mu(f)$. Then
  \begin{align*}
    [\mu * \nu] (f) = \sum_{g\in G} \mu(g) \cdot g\nu(f) = \sum_{g\in
      G} \mu(g) \cdot h(g) = h(e) =\nu(f),
  \end{align*}
  where in the third equality we used the $\mu$-harmonicity of $h$ at
  $e$.  Because $f$ is arbitrary, it follows that
  $\mu*\nu=\nu$.

  Conversely, if there exists an $f_0$ such that $h_0=F_\mu(f_0)$ is not
  harmonic at some $g_0 \in G$ then, assuming without loss of
  generality that $g_0=e$,
  \begin{align*}
    [\mu * \nu] (f_0) = \sum_{g\in G}\mu(g)\cdot g\nu(f_0)
    = \sum_{g\in G}\mu(g)\cdot h_0(g)\neq h_0(e) = \nu(f_0),
  \end{align*}
  and so $\mu*\nu \neq \nu$.
\end{proof}

Among the many characterizations of the Furstenberg-Poisson boundary,
we consider the following.

\begin{definition}
  The {\it Furstenberg-Poisson boundary} of $(G,\mu)$ is the unique
  $\mu$-stationary $G$-space $(X,\nu)$ for which the Furstenberg
  transform $F_\mu$ is an isometric bijection from $L^\infty(X,\nu)$
  to $\HH^\infty(G,\mu)$.
\end{definition}

The aforementioned uniqueness is up to isomorphism of
$(G,\mu)$-stationary spaces. For precise definitions and constructions, we refer the
reader to Bader and Shalom~\cite{bader2006factor}  or Furstenberg and
Glasner~\cite{furstenberg2009stationary}.

\subsection{Induced action}

Let $(X,\nu)$ be a $(G,\mu)$-stationary space and $\Gamma$ a
$\mu$-recurrent subgroup of $G$ with hitting measure $\theta$. In this
section it is shown that the restricted $\Gamma$-action on $X$ is
$\theta$-stationary.  This follows from the stronger fact that there
exists a one-to-one correspondence between $\mu$-harmonic functions on
$G$ and $\theta$-harmonic functions on $\Gamma$, according to Theorem
\ref{thm furstenberg} below.  In particular, $(G,\mu)$ and
$(\Gamma,\theta)$ have isomorphic Furstenberg-Poisson
boundaries. These results are due to
Furstenberg~\cite{furstenberg1971random}. We include the proof for
completeness.

If $\Gamma$ is recurrent then it is also recurrent for a random walk
starting from an arbitrary $g \in G$: since $\mu$ is generating, the
random walk hits $g$ with positive probability, and conditioned on
that returns to $\Gamma$ with probability one. Hence a random walk
that starts at $g$ will return to $\Gamma$ with probability one.

Accordingly, let $\theta_g$ denote the hitting measure on $\Gamma$ of
the $\mu$-random walk starting at $g \in G$, and let
$\theta_g^{(n)}(\gamma)$ be the probability that a $\mu$-random walk
starting at $g$ will hit $\Gamma$ first at step $n$, exactly at
$\gamma$.

\begin{lemma}
  \label{lemma gamma boundary}
  Let $h\in\HH^\infty(G,\mu)$. Then
  \begin{align*}
    h(g) = \theta_g(h)\,,\ \ \mbox{for all } g \in G.
  \end{align*}
\end{lemma}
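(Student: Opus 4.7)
The plan is to exploit the fact that when $h$ is bounded and $\mu$-harmonic, the process $M_n = h(Z_n)$, with $Z_n = g X_1 \cdots X_n$ the $\mu$-random walk started at $g$, is a bounded martingale with respect to the natural filtration generated by $X_1, X_2, \ldots$. This is immediate from the harmonicity identity $h(g') = \sum_{g_1} \mu(g_1) h(g' g_1)$ applied at $g' = Z_n$.

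Next I would observe that by recurrence of $\Gamma$, the return time $\htime = \htime(g) = \min\{n \ge 1 : Z_n \in \Gamma\}$ is finite $\pp$-almost surely (recurrence from an arbitrary starting point $g$ is exactly the paragraph preceding the lemma). Because $M_n$ is uniformly bounded by $\|h\|_\infty$ and $\htime < \infty$ a.s., the optional stopping theorem applies to give
\begin{align*}
h(g) = M_0 = \E{M_{\htime}} = \E{h(Z_{\htime})} = \sum_{\gamma \in \Gamma} \theta_g(\gamma) h(\gamma) = \theta_g(h),
\end{align*}
which is the desired equality.

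If one prefers to avoid quoting the optional stopping theorem, the same conclusion can be obtained by iterating the harmonicity relation and splitting the sum according to whether the walk has entered $\Gamma$ by step $n$. Indeed, unfolding $h(g) = \sum_{g_1,\ldots,g_n} \mu(g_1) \cdots \mu(g_n) h(g g_1 \cdots g_n)$ and integrating out the post-hitting increments using harmonicity gives
\begin{align*}
h(g) = \sum_{k=1}^{n} \sum_{\gamma \in \Gamma} \theta_g^{(k)}(\gamma) h(\gamma) + R_n,
\end{align*}
where the remainder $R_n$ collects the contribution of paths $(g_1,\ldots,g_n) \in A_n$ that have not yet hit $\Gamma$. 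Since $|R_n| \le \|h\|_\infty \cdot \pp[\htime > n]$ and $\pp[\htime > n] \to 0$ by recurrence, sending $n \to \infty$ yields $h(g) = \sum_{\gamma} \theta_g(\gamma) h(\gamma) = \theta_g(h)$.

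The only potential subtlety is the justification for exchanging the limit with the sum (or equivalently, the application of dominated convergence / optional stopping): this is handled cleanly by the uniform bound $|M_n| \le \|h\|_\infty$ together with the almost-sure finiteness of $\htime$, so no serious obstacle arises. The lemma is really just the statement that bounded $\mu$-harmonic functions satisfy the mean value property with respect to the exit distribution on any recurrent subset.
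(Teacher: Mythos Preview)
Your proof is correct and follows essentially the same approach as the paper: define the bounded martingale $M_n = h(Z_n)$ and apply the optional stopping theorem at the a.s.\ finite stopping time $\htime$. Your write-up is in fact slightly more careful than the paper's in justifying the hypotheses of optional stopping, and the alternative unfolding argument you give is a nice complement but not needed.
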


The intuition behind this lemma is the following: $\Gamma$ is
recurrent, and hence the $\mu$-random walk hits $\Gamma$ almost
surely. $\Gamma$ can be therefore be viewed as a boundary of the
random walk, and so, as in the solution of the classical Dirichlet
problem, the value of a harmonic function $h$ at some $g \in G$ is
equal to the average value of $h$ on this boundary, weighted according
to the hitting measure of a random walk starting at $g$. That is,
$h(g)=\theta_g(h)$. In particular, $h$ is determined by its values on
$\Gamma$.

\begin{proof}
  Since $h$ is a bounded harmonic function, the sequence of random variables
  $M_1,M_2,\ldots$ defined by
  \begin{align*}
    M_n=h(Z_n)
  \end{align*}
  is a bounded martingale. Note that $\htime$ is a stopping time, and hence,
  by the optional stopping theorem,
  \begin{align*}
    h(g)=\EE{g}{h(Z_1)}=\EE{g}{h(Z_\htime)}=\theta_g(h),
  \end{align*}
  where $\EE{g}{\cdot}$ denotes expectation for random walks starting
  at $g$.
\end{proof}

The next theorem shows that not only is $h$ determined by its values
on $\Gamma$, but that the restriction of $h$ to $\Gamma$ is an
isometric bijection between $\mu$-harmonic functions on $G$ and
$\theta$-harmonic functions on $\Gamma$.

\begin{theorem}[Furstenberg \cite{furstenberg1971random}]\label{thm furstenberg}
The restriction map
\begin{align}
  \begin{array}{ccrcl}
    \Psi&:&\HH^\infty(G,\mu)&\longrightarrow &\HH^\infty(\Gamma,\theta)\\
    &&h&\longmapsto&h|_\Gamma
  \end{array}\label{eq 11}
\end{align}
is an isometric bijection between $\HH^\infty(G,\mu)$ and
$\HH^\infty(\Gamma,\theta)$.
\end{theorem}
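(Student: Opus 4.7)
The plan is to verify that $\Psi$ is (a) well-defined, (b) injective, (c) isometric, and (d) surjective. The central tool throughout is Lemma~\ref{lemma gamma boundary}, which asserts $h(g)=\theta_g(h)$ for every bounded $\mu$-harmonic $h$, together with the following elementary observation: for $\gamma\in\Gamma$, the $\mu$-random walk starting at $\gamma$ hits $\Gamma$ at time $\htime$ at the point $\gamma\cdot Z_\htime$ (because $\gamma X_1\cdots X_n\in\Gamma$ iff $X_1\cdots X_n\in\gamma^{-1}\Gamma=\Gamma$). Therefore $\theta_\gamma$ is the pushforward of $\theta$ under left-multiplication by $\gamma$.

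For well-definedness, pick $h\in\HH^\infty(G,\mu)$ and $\gamma\in\Gamma$. By Lemma~\ref{lemma gamma boundary} and the observation above,
\[
h(\gamma)=\theta_\gamma(h)=\sum_{\gamma'\in\Gamma}\theta(\gamma')\,h(\gamma\gamma'),
\]
which is exactly the $\theta$-harmonicity of $h|_\Gamma$ at $\gamma$. Injectivity follows immediately: if $h|_\Gamma=0$ then, since $\theta_g$ is supported on $\Gamma$, Lemma~\ref{lemma gamma boundary} yields $h(g)=\theta_g(h)=0$ for every $g\in G$. The same identity, combined with the fact that each $\theta_g$ is a probability measure, gives $|h(g)|\le \|h|_\Gamma\|_\infty$, so $\|h\|_\infty\le\|h|_\Gamma\|_\infty$; the reverse inequality is trivial, proving the isometry.

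The main step is surjectivity. Given $h_\Gamma\in\HH^\infty(\Gamma,\theta)$, I would define the candidate extension
\[
h(g):=\theta_g(h_\Gamma),\qquad g\in G,
\]
which is automatically bounded by $\|h_\Gamma\|_\infty$. To see that $h$ restricts to $h_\Gamma$ on $\Gamma$, use the pushforward description of $\theta_\gamma$ and the $\theta$-harmonicity of $h_\Gamma$:
\[
h(\gamma)=\theta_\gamma(h_\Gamma)=\sum_{\gamma'\in\Gamma}\theta(\gamma')\,h_\Gamma(\gamma\gamma')=h_\Gamma(\gamma).
\]
The heart of the argument is $\mu$-harmonicity of $h$ on all of $G$. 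I would establish this by a first-step analysis: condition on $X_1=g_1$. If $gg_1\in\Gamma$, then $\htime=1$ and the walk hits $\Gamma$ at $gg_1$, contributing $h_\Gamma(gg_1)=h(gg_1)$. If $gg_1\notin\Gamma$, the remaining walk is an independent $\mu$-random walk from $gg_1$, whose hitting distribution is $\theta_{gg_1}$, contributing $\theta_{gg_1}(h_\Gamma)=h(gg_1)$. Summing over $g_1$ weighted by $\mu(g_1)$ yields $h(g)=\sum_{g_1}\mu(g_1)h(gg_1)$, which is exactly harmonicity.

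The only potential subtlety is the rigorous justification of the first-step decomposition, i.e.\ the strong Markov property applied at time $1$ to the hitting time $\htime$. This is standard for discrete-time Markov chains with countable state space and in fact is essentially the same device used in the proof of Lemma~\ref{lemma gamma boundary}, so I expect it to be the only non-cosmetic step but not a serious obstacle.
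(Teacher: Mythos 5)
Your proposal is correct and follows essentially the same route as the paper: Lemma~\ref{lemma gamma boundary} plus the translation identity $\theta_\gamma(\lambda)=\theta(\gamma^{-1}\lambda)$ for well-definedness and injectivity, the extension $h(g)=\theta_g(h_\Gamma)$ with a first-step decomposition of $\theta_g$ for surjectivity, and the fact that $h(g)$ is a $\theta_g$-average of values on $\Gamma$ for the isometry. The only cosmetic difference is that you prove the norm equality directly from $|h(g)|\le\lVert h|_\Gamma\rVert_\infty$, whereas the paper phrases this as a short contradiction argument.
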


\begin{proof}
  First we show that the image of $\Psi$ is in
  $\HH^\infty(\Gamma,\theta)$: for $h\in\HH^\infty(G,\mu)$, we show
  that $\Psi(h)$ is $\theta$-harmonic. Let $h' = \Psi(h)$, so
  that $h'(\gamma)=h(\gamma)$ for $\gamma \in \Gamma$. By
  Lemma~\ref{lemma gamma boundary} above, we have that
  \begin{align*}
    h'(\gamma) = h(\gamma) = \theta_\gamma(h) = \sum_{\lambda \in
      \Gamma}\theta_\gamma(\lambda)\cdot h(\lambda).
  \end{align*}
  Since in the last expression we evaluate $h$ only on $\Gamma$ then
  we can replace $h$ with $h'$, so that
  \begin{align*}
    h'(\gamma) = \sum_{\lambda \in \Gamma}\theta_\gamma(\lambda)\cdot h'(\lambda).
  \end{align*}

  We claim that
  \begin{align}
    \label{eq theta-gamma}
    \theta_\gamma(\lambda) =  \theta(\gamma^{-1}\lambda).
  \end{align}
  To see this, couple two $\mu$-random walks to perform the same
  increments, where one starts at $\gamma$ and the other at the
  identity. Then the first walk visits $\lambda$ exactly when the
  second walk visits $\gamma^{-1}\lambda$.

  Hence
  \begin{align*}
    h'(\gamma) = \sum_{\lambda \in \Gamma}\theta(\lambda)\cdot h'(\gamma\lambda),
  \end{align*}
  and so $h'=\Psi(h)$ is indeed $\theta$-harmonic.

  We next show that $\Psi$ is a bijection. By Lemma~\ref{lemma gamma
    boundary} $h$ is determined by its values on $\Gamma$, and
  therefore $\Psi$ is one-to-one. To show that $\Psi$ is onto, given
  $h' \in \HH^\infty(\Gamma,\theta)$, define $h \in \HH^\infty(G,\mu)$
  by $h(g) = \theta_g(h')$. Observe that $\Psi(h)=h'$: for $\gamma \in
  \Gamma$,
  \begin{align*}
    h(\gamma) = \theta_\gamma(h') = \sum_{\lambda \in
      \Gamma}\theta_\gamma(\lambda)\cdot h'(\lambda) = h'(\gamma),
  \end{align*}
  by Eq.~\ref{eq theta-gamma} and the $\theta$-harmonicity of
  $\gamma$. We now show that $h$ is indeed $\mu$-harmonic.

  Express the probability to first hit $\Gamma$ at $\gamma$ when
  starting a $\mu$-random walk at $g$ using the law of conditional
  probabilities, by conditioning on the first step, and separating the
  sum to the events that the first step either hit or did not hit
  $\Gamma$:
  \begin{align*}
    \theta_{g}(\gamma) = \sum_{gg_1 \in
      \Gamma}\mu(g_1) \cdot \delta_{gg_1}(\gamma)  + \sum_{gg_1 \not \in
      \Gamma}\mu(g_1) \cdot \theta_{gg_1}(\gamma) ,
  \end{align*}
  where $\delta_g$ is the Dirac measure concentrated on $g$.
  Hence we can write this equality as an equality
  of measures:
  \begin{align*}
    \theta_{g} = \sum_{gg_1 \in
      \Gamma} \mu(g_1) \cdot \delta_{gg_1} + \sum_{gg_1 \not \in
      \Gamma}\mu(g_1) \cdot \theta_{gg_1}.
  \end{align*}
  But for $gg_1 \in \Gamma$ it holds that $\delta_{gg_1}(h') =
  h'(gg_1) = \theta_{gg_1}(h')$. Hence
  \begin{align*}
    \theta_{g}(h') = \sum_{g_1 \in G}\mu(g_1) \cdot \theta_{gg_1}(h').
  \end{align*}
  The left hand side of the equation above is equal to $h(g)$, and the
  right hand side is the $\mu$-mean value property of $h$ at $g$, and
  so $h$ is indeed $\mu$-harmonic.

  To conclude the proof we show that $\Psi$ preserves the sup
  norms. Clearly, $\lVert h' \rVert \le \lVert h \rVert$. Now if
  $\lVert h' \rVert < \lVert h \rVert$, then there exists some $g_0\in
  G$ with $|h(g_0)|>|h(\gamma)|$ for all $\gamma\in\Gamma$. But
  $h(g_0)=\theta_{g_0} (h)$, that is $h(g_0)$ is an average of values
  of the form $h(\gamma)$, in contradiction.
\end{proof}

\begin{corollary}\label{corollary poisson boundary}
  Every $(G,\mu)$-stationary space is also
  $(\Gamma,\theta)$-stationary.  Furthermore, $(G,\mu)$ and
  $(\Gamma,\theta)$ have the same Furstenberg-Poisson boundary.
\end{corollary}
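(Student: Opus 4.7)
The proof is a direct synthesis of Lemma~\ref{lemma stationarity} and Theorem~\ref{thm furstenberg}, together with the observation that the Furstenberg transforms for $(G,\mu)$ and $(\Gamma,\theta)$ are compatible with the restriction map $\Psi$.

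The plan is the following. First I would fix a $(G,\mu)$-stationary space $(X,\nu)$ and verify the key compatibility: for every $f\in L^\infty(X,\nu)$ and every $\gamma\in\Gamma$,
\begin{align*}
  [\Psi(F_\mu(f))](\gamma) = [F_\mu(f)](\gamma) = \gamma\nu(f) = [F_\theta(f)](\gamma),
\end{align*}
i.e.\ $F_\theta = \Psi\circ F_\mu$ on $L^\infty(X,\nu)$. This is immediate from the definitions, but it is the bridge between the two settings.

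To prove the first assertion, I would apply Lemma~\ref{lemma stationarity} to $(G,\mu)$: stationarity gives $F_\mu(L^\infty(X,\nu))\subset\HH^\infty(G,\mu)$. Theorem~\ref{thm furstenberg} then shows $\Psi$ sends $\HH^\infty(G,\mu)$ into (in fact, onto) $\HH^\infty(\Gamma,\theta)$. By the identity above, $F_\theta(L^\infty(X,\nu))\subset\HH^\infty(\Gamma,\theta)$, and Lemma~\ref{lemma stationarity} applied to $(\Gamma,\theta)$ yields that $(X,\nu)$ is $(\Gamma,\theta)$-stationary.

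For the boundary statement, I would take $(X,\nu)$ to be the Furstenberg-Poisson boundary of $(G,\mu)$, so that $F_\mu\colon L^\infty(X,\nu)\to\HH^\infty(G,\mu)$ is an isometric bijection by definition. Theorem~\ref{thm furstenberg} states that $\Psi\colon\HH^\infty(G,\mu)\to\HH^\infty(\Gamma,\theta)$ is also an isometric bijection. Composing, $F_\theta=\Psi\circ F_\mu$ is an isometric bijection from $L^\infty(X,\nu)$ onto $\HH^\infty(\Gamma,\theta)$, which by the uniqueness in the definition of the Furstenberg-Poisson boundary identifies $(X,\nu)$ as the Furstenberg-Poisson boundary of $(\Gamma,\theta)$ as well.

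There is no real obstacle here; the only content beyond bookkeeping is the intertwining identity $F_\theta=\Psi\circ F_\mu$, and the heavy lifting has already been done in Theorem~\ref{thm furstenberg}.
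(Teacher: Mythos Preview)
Your proposal is correct and follows essentially the same approach as the paper: both proofs hinge on the identity $F_\theta=\Psi\circ F_\mu$, then invoke Lemma~\ref{lemma stationarity} for the stationarity claim and compose the two isometric bijections from Theorem~\ref{thm furstenberg} and the definition of the boundary for the second claim. The only difference is that you spell out the intertwining identity explicitly, whereas the paper asserts it without further comment.
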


\begin{proof}
  If $(X,\nu)$ is $(G,\mu)$-stationary, then $F_\theta=\Psi\circ
  F_\mu$ maps $L^\infty(X,\nu)$ to $\HH^\infty(\Gamma,\theta)$. By
  Lemma \ref{lemma stationarity}, this means that $\nu$ is
  $\theta$-stationary.

  If $(X,\nu)$ is the Furstenberg-Poisson boundary of $(G,\mu)$, then
  $F_\theta=\Psi\circ F_\mu$ is a composition of
  isometric bijections and thus an isometric bijection as well.
\end{proof}

\section{Expected hitting time and index: a Kac formula}\label{section hitting time=index}

The goal of this section is to prove Theorem~\ref{thm kac}, namely
that the expected return time to a recurrent subgroup is equal to its
index, whether the subgroup is positive or null recurrent.  We do this
by inducing a Markov chain on the quotient $\Gamma\backslash G$.

Let $\Gamma$ be a subgroup of $G$,
\begin{align*}
  \Gamma\backslash G=\{\Gamma g\,;\,g\in G\}
\end{align*}
the set of right cosets and $\pi:G\rightarrow \Gamma\backslash G$ the
projection map. $G$ naturally acts on $\Gamma\backslash G$ by right
multiplication,
\begin{equation*}
\begin{array}{rcl}
\Gamma\backslash G\times G&\longrightarrow &\Gamma\backslash G\\
(\Gamma g,g_1)    &\longmapsto     &\Gamma gg_1\,.
\end{array}
\end{equation*}
Since $((\Gamma h)g)g_1=(\Gamma h)(gg_1)$, each map
\begin{equation*}
\begin{array}{ccrcl}
g&:&\Gamma\backslash G&\longrightarrow &\Gamma\backslash G\\
& &          \Gamma g_1&\longmapsto     &(\Gamma g_1)g=\Gamma g_1g
\end{array}
\end{equation*}
is a bijection.

The Markov chain $\{Z_n\}_{n=1}^\infty$ projects under $\pi$ to a
Markov chain $\{Y_n\}_{n=1}^\infty$ which has special properties,
according to the following lemma.

\begin{lemma}\label{lemma markov chain}
  Let $G$ be a second countable topological group with a generating
  probability measure $\mu$, and let $\Gamma$ be an open, recurrent
  subgroup of $(G,\mu)$. Then $\{Y_n\}_{n=1}^\infty$ is a time-independent Markov chain
  on $\Gamma\backslash G$. Furthermore, it is
  \begin{enumerate}[(a)]
  \item doubly stochastic,
  \item irreducible, and
  \item recurrent.
  \end{enumerate}
\end{lemma}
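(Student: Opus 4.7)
The plan is to verify directly from the definitions that the coset-projection turns $\{Z_n\}$ into a time-homogeneous Markov chain with the three stated properties. Observe first that $Y_{n+1}=Y_n\cdot X_{n+1}$, because $Z_{n+1}=Z_nX_{n+1}$ and the right action $(\Gamma h, g)\mapsto\Gamma hg$ of $G$ on $\Gamma\backslash G$ is well-defined (independent of the chosen representative of $\Gamma h$). Since $X_{n+1}$ is distributed as $\mu$ and independent of $Y_1,\ldots,Y_n$, the chain $\{Y_n\}$ is Markov and time-homogeneous, with transition kernel
\begin{equation*}
  P(c,c')=\mu(\{h\in G:ch=c'\}).
\end{equation*}
Note also that, because $\Gamma$ is open in the second countable group $G$, the cosets form a disjoint family of non-empty open sets, and hence $\Gamma\backslash G$ is countable; we are therefore in the standard setting of a countable-state Markov chain.

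For (a), the key observation is that right multiplication by any $h\in G$ is a bijection of $\Gamma\backslash G$ (with inverse given by right multiplication by $h^{-1}$). Consequently, for each fixed $c'$ and fixed $h$, there is a unique $c$ with $ch=c'$, namely $c=c'h^{-1}$. Summing $P(c,c')$ over $c$ therefore reindexes as a sum of $\mu(h)$ over $h\in G$, giving $\sum_c P(c,c')=1$; equivalently, the counting measure on $\Gamma\backslash G$ is a stationary measure for the chain, which is the double-stochasticity statement.

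For (b), since $\mu$ is generating, for any cosets $c=\Gamma g$ and $c'=\Gamma g'$ the non-empty open set $\{h:ch=c'\}=g^{-1}\Gamma g'$ is charged by $\mu^{*n}$ for some $n$, so $P^n(c,c')>0$. For (c), the hypothesis that $\Gamma$ is $\mu$-recurrent means precisely that $Y_n=\Gamma$ for infinitely many $n$ almost surely when the chain starts at the identity coset; hence $\Gamma$ is a recurrent state, and by (b) every state communicates with $\Gamma$ and is therefore recurrent as well.

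The main obstacle is part (a): it is the only step that requires exploiting the specific algebraic structure of $\Gamma\backslash G$, and the argument really hinges on observing that the right $G$-action on the coset space is by bijections, so that counting measure is automatically $G$-invariant. Parts (b) and (c) then amount to unpacking the generating and recurrence hypotheses once the Markov-chain setup is in place.
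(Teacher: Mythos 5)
Your proof is correct and follows essentially the same route as the paper: the kernel $P(c,c')$ is the $\mu$-mass of the coset preimage, double stochasticity comes from the fact that right multiplication acts on $\Gamma\backslash G$ by bijections (equivalently, counting measure is invariant), irreducibility from the generating hypothesis via $P^n(\Gamma g,\Gamma g')=\mu^{*n}(g^{-1}\Gamma g')>0$, and recurrence from the equivalence $Z_n\in\Gamma\Leftrightarrow Y_n=\Gamma e$ together with irreducibility. The only cosmetic difference is that you treat the topological case directly (reading the ``sum over $h\in G$'' in (a) as the countable partition of $G$ into the fibers $g^{-1}\Gamma g'$), whereas the paper first writes the discrete case and then notes the general case is identical with sums replaced by integrals.
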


\begin{proof}
  Assume first that $G$ is discrete. For $\Gamma g_1,\Gamma
  g_2\in\Gamma\backslash G$, denote the transition probabilities by
  \begin{align*}
    p(\Gamma g_1,\Gamma g_2)=\sum_{g\in G\atop{\Gamma g_1g=\Gamma g_2}}\mu(g).
  \end{align*}
  This indeed defines a stochastic matrix ${\bf P}=(p(\Gamma g_1,\Gamma
  g_2))_{\Gamma\backslash G\times \Gamma\backslash G}$ for which
  \begin{align*}
    \P{Y_n=\Gamma g_n\,|\,Y_1=\Gamma g_1,\ldots,Y_{n-1}=\Gamma g_{n-1}}
    &=\P{Y_n=\Gamma g_n\,|\,Y_{n-1}=\Gamma g_{n-1}}\\
    &= p(\Gamma g_{n-1},\Gamma g_n).
  \end{align*}
  Hence $\{Y_n\}_{n=1}^\infty$ is a time-independent Markov chain on
  $\Gamma\backslash G$ with transition matrix ${\bf P}$.
  We now  prove (a), (b) and (c).
  \begin{enumerate}[(a)]
  \item The sum of the column of ${\bf P}$ associated with $\Gamma g_2$ is
    \begin{align*}
      \sum_{\Gamma g_1\in\Gamma\backslash G}p(\Gamma g_1,\Gamma g_2)
      &=\sum_{\Gamma g_1\in\Gamma\backslash G}\sum_{g\in G\atop{\Gamma
          g_1g=\Gamma g_2}}\mu(g).
    \end{align*}
    Changing the order of summation and rearranging we get
    \begin{align*}
      \sum_{\Gamma g_1\in\Gamma\backslash G}p(\Gamma g_1,\Gamma g_2)
      &=\sum_{g\in G}\mu(g)\cdot |\{\Gamma g_1\in\Gamma\backslash G\,;\,\Gamma g_1g=\Gamma g_2\}|=\sum_{g\in G}\mu(g)=1,
    \end{align*}
    where in the second equality we used that each
    $g:\Gamma\backslash G\rightarrow \Gamma\backslash G$ is a
    bijection.
  \item The fact that $\mu$ is generating guarantees that
    $\{Z_n\}_{n=1}^\infty$ is an irreducible Markov chain, and the
    irreducibility property descends to $\{Y_n\}_{n=1}^\infty$.
  \item Note that $Z_n$ belongs to $\Gamma$ if and only if $Y_n =
    \Gamma e$, the trivial coset. Therefore, since
    $\{Z_n\}_{n=1}^\infty$ is recurrent to $\Gamma$,
    $\{Y_n\}_{n=1}^\infty$ is recurrent to the trivial coset $\Gamma
    e$.
  \end{enumerate}

  Consider now the general case that $\Gamma$ is an open, recurrent
  subgroup of a second countable group $G$.  Then the quotient space
  $\Gamma\backslash G$ is countable, and so we can define the Markov
  chain $\{Y_n\}_{n=1}^\infty$ as above, substituting integrals of
  $\mu$ for sums of $\mu$. The
  assumption that $\mu$ is generating means in this context that the
  semigroup generated by the support of $\mu$,
  \begin{align*}
    \supp(\mu) = \{g\in G\,;\,\mu(A)>0\text{ for any open subset }A\text{ containing }g\},
  \end{align*}
  is the whole of $G$. This implies that $\{Y_n\}_{n=1}^\infty$ is
  irreducible. The proof that it is doubly stochastic and recurrent is
  identical to the proof above.
\end{proof}

Continuing the discussion of item (c) above, the return time $\htime$
of $\{Z_n\}_{n=1}^\infty$ to $\Gamma$ is equal to the return time
\begin{equation*}
\begin{array}{ccrcl}
\overline\htime&:&\Omega&\longrightarrow &\N\\
             & &\omega&\longmapsto     &\min\{n\ge 1\,;\,Y_n(\omega)=\Gamma e\}
\end{array}
\end{equation*}
of $\{Y_n\}_{n=1}^\infty$ to the trivial coset $\Gamma e$. In
particular, $\E{\overline\htime} =\E{\htime}$ and thus
$\{Y_n\}_{n=1}^\infty$ is a positive/null recurrent Markov chain if
and only if $\Gamma$ is a positive/null recurrent subgroup.

We are now ready to prove Theorems~\ref{thm kac}. In this proof we use
some classical results on Markov chains, which the uninitiated reader
may find in textbooks such as~\cite{levin2009markov}.
\begin{proof}[Proof of Theorem \ref{thm kac}]
  Consider the definition of $\{Y_n\}_{n=1}^\infty$ above.  Since
  $\{Y_n\}_{n=1}^\infty$ is irreducible and recurrent, it has a unique
  (up to multiplication by constants) stationary measure $\eta$. Since
  it is also doubly stochastic, this stationary measure is constant.

  If $[G:\Gamma] < \infty$ then we can normalize $\eta$ to be a
  probability measure, in which case, by Kac's theorem, $\E{\htime}$,
  the expected return time to $\Gamma e$, is equal to $1/\eta(\Gamma
  e) = [G:\Gamma]$. If $[G:\Gamma] = \infty$ then $\eta$ cannot be
  normalized to be a probability measure. Hence $\{Y_n\}_{n=1}^\infty$
  admits no stationary probability measure, and is therefore null
  recurrent, so that $\E{\htime} = \infty$.

\end{proof}

As a direct consequence of this Kac formula, the property of positive
recurrence of a subgroup $\Gamma$ of $G$ is independent of $\mu$: for
any generating measure, the positive recurrent subgroups of $G$ are
those with finite index. On the other hand, the properties of null
recurrence and transience do depend on the chosen measure $\mu$. For
example, let $G=\mathbb Z$ and $\Gamma=\{0\}$ be the trivial
subgroup. Then $\Gamma$ is recurrent for the simple random walk on
$\mathbb Z$, but transient for any random walk on $\mathbb Z$ with
drift. It may be interesting to characterize the subgroups that are
transient for any generating measure $\mu$ on $G$.

\section{An Abramov formula}\label{section abramov formula}

In this section we prove Theorem \ref{thm abramov}. Let $(X,\nu)$ be a
$(G,\mu)$-stationary space. For shortness of notation, let $h_\mu$ and
$h_\theta$ denote $h_\mu(X,\nu)$ and $h_\theta(X,\nu)$, respectively.
Recall the definitions of the maps $\rho$ and $\varphi$ given by
Eqs.~\ref{eq 3} and~\ref{eq 5}:
\begin{align*}
  \rho(g,x)=-\log\dfrac{dg^{-1}\nu}{d\nu}(x),\quad\quad\quad
  \varphi(g)=\int_X\rho(g,x)d\nu(x).
\end{align*}

\begin{lemma}\label{lemma nearly harmonic}
For any $g\in G$,
\begin{equation}\label{eq twisted function}
  \sum_{g_1\in G}\mu(g_1)\cdot\varphi(gg_1)  =\varphi(g)+h_\mu\,.
\end{equation}
\end{lemma}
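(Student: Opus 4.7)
The plan is to unfold $\varphi(gg_1)$ via the additive cocycle relation of $\rho$ (Eq.~\ref{eq 4}), then take the $\mu$-average over $g_1$ and use the stationarity $\mu \ast \nu = \nu$ to recover $\varphi(g)$ from one of the two resulting pieces, while recognizing the other as $h_\mu$ by definition.

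Concretely, I would first invoke Eq.~\ref{eq 4} in the form $\rho(gg_1, x) = \rho(g, g_1 x) + \rho(g_1, x)$, integrate against $\nu(x)$, and read off
\[
  \varphi(gg_1) \;=\; \int_X \rho(g, g_1 x)\, d\nu(x) + \varphi(g_1).
\]
Multiplying by $\mu(g_1)$ and summing over $g_1 \in G$, the $\varphi(g_1)$ contribution is exactly $h_\mu$ by the definition of Furstenberg entropy. For the remaining piece, I would change variables via $y = g_1 x$, which turns $\int_X \rho(g, g_1 x)\, d\nu(x)$ into $\int_X \rho(g, y)\, d(g_1\nu)(y)$; after interchanging sum and integral one obtains
\[
  \sum_{g_1 \in G} \mu(g_1) \int_X \rho(g, g_1 x)\, d\nu(x) \;=\; \int_X \rho(g, y)\, d[\mu \ast \nu](y) \;=\; \int_X \rho(g, y)\, d\nu(y) \;=\; \varphi(g),
\]
where the middle equality is precisely the $\mu$-stationarity of $\nu$. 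Combining the two pieces gives Eq.~\ref{eq twisted function}.

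The only nonroutine point is justifying the interchange of sum and integral, since $\rho(g,\cdot)$ is not sign-definite. I would handle it by noting that $\varphi \geq 0$ by Jensen, that $\sum_{g_1} \mu(g_1)\varphi(g_1) = h_\mu$ is finite under the hypothesis $H(\mu) < \infty$ (via $h_\mu \leq h(G,\mu) \leq H(\mu)$, using Theorem~\ref{thm compare entropies} and the subadditivity $H(\mu^n) \leq n H(\mu)$), and that the same bound applied to the shifted measure controls $\sum_{g_1}\mu(g_1)\varphi(gg_1)$. From these bounds Fubini--Tonelli applies to the positive and negative parts of $\rho$ separately, and the computation above is legitimate. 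Aside from this integrability check, the lemma is a one-line consequence of the cocycle identity and stationarity.
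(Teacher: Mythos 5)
Your argument is correct and is essentially the paper's proof: integrate the cocycle identity (Eq.~\ref{eq 4}) against $\mu\times\nu$, identify the $\varphi(g_1)$ piece with $h_\mu$, and use $\mu*\nu=\nu$ after the change of variables to recover $\varphi(g)$. The only difference is your integrability discussion (which the paper omits); note that the entropy bounds you cite control sums of the $\varphi$-values but not $\sum_{g_1}\mu(g_1)\int_X|\rho(g,g_1x)|\,d\nu(x)$, since each $\varphi$ is an integral with possible internal cancellation --- the cleanest justification of the interchange is the pointwise bound $\rho(g,x)\le-\log\mu^n(g)$ of Eq.~\ref{eq 9} (with $n$ chosen so that $\mu^n(g)>0$), which makes the positive part of the integrand uniformly bounded because $g$ is fixed after the change of variables.
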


Hence $\varphi$ is, in a sense, a ``nearly harmonic'' function, in
which the mean value property is corrected by a factor of $h_\mu$.

\begin{proof}
For a fixed $g\in G$, integrate the cocycle relation
\begin{equation*}
  \rho(gg_1,x)=\rho(g,g_1x)+\rho(g_1,x)
\end{equation*}
with respect to $g_1$ and $x$ to get
\begin{align*}
  \sum_{g_1\in G}\mu(g_1)\cdot\varphi(gg_1)=\sum_{g_1\in G}\mu(g_1)\cdot\int_X\rho(g,g_1x)d\nu(x)+h_\mu\,.
\end{align*}
Applying a change of coordinates and using the $\mu$-stationarity
of $\nu$, the sum in the right hand side of the above equality is
\begin{align*}
\sum_{g_1\in G}\mu(g_1)\cdot\int_X\rho(g,x)dg_1\nu(x)&=
\int_X\rho(g,x)d\left(\sum_{g_1\in G}\mu(g_1)\cdot g_1\nu\right)(x)\\
&=\int_X\rho(g,x)d\nu(x)\\
&=\varphi(g),
\end{align*}
and the claim is proved.
\end{proof}

Proceeding as in the proof of Lemma \ref{lemma gamma boundary}, one may define
the martingale $M_1,M_2,\ldots$ by
\begin{align*}
M_n=\varphi(Z_n)-n\cdot h_\mu.
\end{align*}
The expectation of each $M_n$ is equal to $\E{M_0}=\varphi(e)=0$.
Note that, when this martingale or its increments are uniformly bounded,
Theorem~\ref{thm abramov} is a consequence of the optional stopping theorem:
\begin{align*}
0=\E{M_0}=\E{M_{\htime}}=\E{\varphi(Z_\htime)-\htime\cdot h_\mu}=h_\theta-\E{\htime}\cdot h_\mu.
\end{align*}

However, this martingale is not bounded, and in general neither are
its increments. For example, when $\mu$ has full support and
$h_\mu\not=0$, then $|M_n-M_{n+1}|$ can be arbitrarily large.  Hence
the optional stopping theorem cannot be used to prove Theorem \ref{thm
  abramov}.  Instead, we replicate below a proof of the optional
stopping theorem, keeping account of an error term $R_n$. To show that
$R_n$ vanishes, we prove in Appendix~\ref{appendix probabilistic
  lemma} a lemma for the Markov chain $\{Y_n\}_{n=1}^\infty$ defined
in Section~\ref{section hitting time=index}.

Let $t_n=\P{\htime=n}$ be the probability that the return time to
$\Gamma$ is equal to $n$. Start by writing Eq.~\ref{eq twisted
  function} at $g=e$. Since $\varphi(e)=0$, this becomes
\begin{align*}
  h_\mu&=\sum_{g_1\in G} \mu(g_1) \cdot \varphi(g_1).
\end{align*}
Separating the sum into values of $g_1$ that are in $\Gamma$ and
values that are not, we get
\begin{align*}
  h_\mu &=\sum_{g_1\in \Gamma} \mu(g_1) \cdot \varphi(g_1) + \sum_{g_1
    \not
    \in \Gamma} \mu(g_1) \cdot \varphi(g_1) \\
  &=\theta^{(1)}(\varphi)+\sum_{g_1\not\in\Gamma} \mu(g_1) \cdot
  \varphi(g_1) .
\end{align*}
To each term $\varphi(g_1)$ with $g_1\not\in\Gamma$ apply Eq.~\ref{eq
  twisted function} again to obtain
\begin{eqnarray*}
  h_\mu&=&\theta^{(1)}(\varphi)+
  \sum_{g_1\not\in\Gamma} \mu(g_1)\left(\sum_{g_2\in G} \mu(g_2) \cdot \varphi(g_1g_2) -h_\mu\right) \\
  &=&\theta^{(1)}(\varphi)+
  \sum_{g_1\not\in\Gamma\atop{g_2\in G}}\mu(g_1) \cdot \mu(g_2) \cdot \varphi(g_1g_2) -h_\mu\cdot\sum_{g_1\not\in\Gamma}\mu(g_1)\\
  &=&\theta^{(1)}(\varphi)+
  \sum_{g_1\not\in\Gamma\atop{g_2\in G}} \mu(g_1)\cdot \mu(g_2) \cdot \varphi(g_1g_2)  -h_\mu\cdot(1-t_1)\\
  &=&\theta^{(1)}(\varphi)+\theta^{(2)}(\varphi)+
  \sum_{g_1\not\in\Gamma\atop{g_1g_2\not\in\Gamma}} \mu(g_1)\cdot
  \mu(g_2) \cdot \varphi(g_1g_2)
  -h_\mu\cdot(1-t_1).
\end{eqnarray*}
Then, using the fact that $\sum_{k\ge 1}t_k=1$, we arrive at
\begin{align*}
  h_\mu\cdot\left(\sum_{k\ge 1}t_k+\sum_{k\ge 2}t_k\right)=
  \theta^{(1)}(\varphi)+\theta^{(2)}(\varphi)+\sum_{g_1\not\in\Gamma\atop{g_1g_2\not\in\Gamma}}\mu(g_1)
  \cdot \mu(g_2) \cdot \varphi(g_1g_2)
\end{align*}
 Recalling Definition~\ref{def
  avoiding} of the avoiding sets $A_n$, we can repeat the above
argument to conclude that
\begin{equation}\label{eq 7}
h_\mu\cdot\sum_{j=1}^n\sum_{k\ge j}t_k=\sum_{k=1}^{n}\theta^{(k)}(\varphi)+
\sum_{(g_1,\ldots,g_n)\in A_n}\mu(g_1)\cdots\mu(g_n) \cdot \varphi(g_1\cdots g_n).
\end{equation}
Observe that
\begin{align*}
  \lim_{n\rightarrow\infty}\sum_{j=1}^n\sum_{k\ge
    j}t_k=\lim_{n\rightarrow\infty}\sum_{k=1}^nk \cdot t_k =\E{\htime},
\end{align*}
which equals $[G:\Gamma]$, by Theorem~\ref{thm kac}. Observe also that
\begin{align*}
  \lim_{n\rightarrow\infty} \sum_{k=1}^n\theta^{(k)}(\varphi) =
  \theta(\varphi) = h_\theta.
\end{align*}
Hence, if we take the limit of Eq.~\ref{eq 7} as $n$ goes to infinity,
we get
\begin{align*}
  h_\mu \cdot [G:\Gamma] = h_\theta+\lim_{n \rightarrow
    \infty}\sum_{(g_1,\ldots,g_n)\in A_n}\mu(g_1)\cdots\mu(g_n) \cdot
  \varphi(g_1\cdots g_n).
\end{align*}
The theorem will be proved if we show that the error term
\begin{align*}
  R_n &=\sum_{(g_1,\ldots,g_n)\in A_n} \mu(g_1)\cdots\mu(g_n) \cdot
  \varphi(g_1\cdots g_n)\\
  &= \E{\varphi(Z_n) \cdot \ind{\htime > n}}
\end{align*}
converges to zero as $n$ goes to infinity. To this purpose, we first
bound $\varphi(g_1\cdots g_n)$.

\begin{lemma}\label{lemma bounding phi}
  For any $g_1,\ldots,g_n\in G$,
  \begin{equation}\label{eq 8}
    \varphi(g_1\cdots g_n)\le-\sum_{k=1}^n\log\mu(g_k).
  \end{equation}
\end{lemma}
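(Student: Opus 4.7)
The plan is to exploit the iterated $\mu$-stationarity of $\nu$ to obtain a pointwise lower bound on the Radon–Nikodym derivative appearing in the definition of $\rho$, and then simply integrate.

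More concretely, if any $\mu(g_k)=0$, the right-hand side is $+\infty$ and there is nothing to prove. Otherwise, I would first iterate the stationarity relation $\nu=\mu*\nu$ to obtain $\nu=\mu^n*\nu$, i.e.
\begin{equation*}
  \nu = \sum_{h_1,\ldots,h_n\in G}\mu(h_1)\cdots\mu(h_n)\,h_1\cdots h_n\,\nu.
\end{equation*}
Since every summand is a nonnegative measure, singling out the particular tuple $(h_1,\ldots,h_n)=(g_1,\ldots,g_n)$ and writing $g_0=g_1\cdots g_n$ yields the inequality of measures
\begin{equation*}
  \nu \;\ge\; \mu(g_1)\cdots\mu(g_n)\cdot g_0\nu.
\end{equation*}

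Next I would push this inequality forward by $g_0^{-1}$. Since pushforward by $g_0^{-1}$ preserves the order on nonnegative measures and $g_0^{-1}(g_0\nu)=\nu$, this gives
\begin{equation*}
  g_0^{-1}\nu \;\ge\; \mu(g_1)\cdots\mu(g_n)\cdot\nu,
\end{equation*}
which, read as a statement about Radon–Nikodym derivatives (noting that $g_0^{-1}\nu$ is absolutely continuous with respect to $\nu$ since stationary actions preserve the measure class of $\nu$), says
\begin{equation*}
  \frac{dg_0^{-1}\nu}{d\nu}(x)\;\ge\;\mu(g_1)\cdots\mu(g_n)\quad\text{for $\nu$-a.e.\ }x.
\end{equation*}

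Applying $-\log$ and recalling Eq.~\ref{eq 3}, this is exactly $\rho(g_0,x)\le -\sum_{k=1}^n\log\mu(g_k)$ for $\nu$-a.e.\ $x$, and integrating against $\nu$ gives the claimed bound on $\varphi(g_1\cdots g_n)$. There is no real obstacle here; the only point that needs a little care is the application of $g_0^{-1}$ to an inequality of measures, but this is immediate from the definition $g\eta(A)=\eta(g^{-1}A)$ and nonnegativity.
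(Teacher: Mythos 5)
Your proposal is correct and follows essentially the same route as the paper: both arguments use the iterated stationarity $\nu=\mu^n*\nu$ to extract a pointwise bound on the Radon--Nikodym derivative at $g_0=g_1\cdots g_n$ (your singling out of the tuple $(g_1,\ldots,g_n)$ is exactly the paper's step $\mu^n(g_1\cdots g_n)\ge\mu(g_1)\cdots\mu(g_n)$), and then integrate against $\nu$. The only cosmetic difference is that you flip the inequality to the needed direction by pushing the measure inequality forward by $g_0^{-1}$, whereas the paper achieves the same flip via the cocycle identity $\rho(g,x)=-\rho(g^{-1},gx)$; both are valid.
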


\begin{proof}
  We first show that for any $g\in G$, $x\in X$ and $n\ge 1$ it holds that
  \begin{equation}\label{eq 9}
    \rho(g,x)\le -\log \mu^n(g).
  \end{equation}
  This is stated in~\cite{kaimanovich1983random}.  Note that, since
  $\nu$ is $\mu$-stationary, it is also $\mu^{n}$-stationary, that is,
  $\sum_{g\in G}\mu^{n}(g)\cdot g\nu=\nu$.  Therefore, for any $x\in X$,
  \begin{align*}
    1= \dfrac{d\nu}{d\nu}(x) = \dfrac{d\left(\sum_{g\in
          G}\mu^{n}(g)\cdot g\nu\right)}{d\nu}(x) = \sum_{g\in
      G}\mu^{n}(g)\cdot\dfrac{dg\nu}{d\nu}(x)
  \end{align*}
  and since all the addends in the above sum are nonnegative, for any
  $g \in G$
  \begin{align*}
    \dfrac{dg^{-1}\nu}{d\nu}(x)\le\dfrac{1}{\mu^{n}(g^{-1})}\ \ \Longrightarrow\
    \ \rho(g,x)\ge -\log\dfrac{1}{\mu^n(g^{-1})}\, \cdot
  \end{align*}
  Now, by the cocycle property (Eq.~\ref{eq 4}) we have
  \begin{align*}
    \rho(g,x)=-\rho(g^{-1},gx)\le -\log\mu^n(g),
  \end{align*}
  thus establishing Eq.~\ref{eq 9}. Because this bound is independent of
  $x$, it implies
  \begin{align*}
  \varphi(g) = \int_X\rho(g,x)d\nu(x) \le -\log \mu^n(g).
  \end{align*}
  To conclude the proof, observe that
  \begin{align*}
    \mu^n(g_1\cdots g_n)\ge \mu(g_1)\cdots \mu(g_n)
  \end{align*}
  and so
  \begin{align*}
  \varphi(g_1\cdots g_n) \le -\log \mu^n(g_1\cdots g_n) \le
  -\sum_{k=1}^n\log \mu(g_k).
  \end{align*}
\end{proof}

Plugging this estimate in the error term $R_n$, we get
\begin{align*}
  R_n&=\E{\varphi(Z_n) \cdot \ind{\htime > n}}\\
  &\leq \E{-\left(\sum_{k=1}^n\log\mu(X_k)\right)\cdot\ind{\htime > n}}\\
  &= -\sum_{k=1}^n\E{\log\mu(X_k) \cdot \ind{\htime > n}}.
\end{align*}
Conditioning on $X_k$, we arrive at
\begin{align*}
  R_n &\leq -\sum_{k=1}^n\sum_{g \in G}\E{\log\mu(X_k) \cdot
    \ind{\htime > n}|X_k=g}\cdot\P{X_k=g}\\
  &= -\sum_{k=1}^n\sum_{g \in G}\mu(g)\log\mu(g)\cdot\P{\htime >
    n|X_k=g},
\end{align*}
since $\P{X_k=g} = \mu(g)$. Note that we are conditioning on the step
$X_k$ and not the position $Z_k$. By Lemma \ref{lemma probabilistic},
each of these conditional probabilities $\P{\htime > n|X_k=g}$ is
bounded by $e^{-Cn}$, for some $C>0$ independent of $n$ and $k$. Hence
\begin{align*}
  R_n\le\sum_{k=1}^n\sum_{g\in G}-\mu(g)\cdot\log\mu(g)\cdot
  e^{-Cn}=n\cdot e^{-Cn}\cdot H(\mu),
\end{align*}
which converges to zero as $n$ goes to infinity. This concludes the
proof of Theorem \ref{thm abramov}.

We finish this section by proving Corollary \ref{corollary}. Let
$(X,\nu)$ be the Furstenberg-Poisson boundary of $(G,\mu)$. Corollary
\ref{corollary poisson boundary} guarantees that $(X,\nu)$ is also the
Furstenberg-Poisson boundary of $(\Gamma,\theta)$ and thus, by Theorem
\ref{thm compare entropies},
\begin{align*}
  h_\mu(X,\nu)=h(G,\mu)\ \ \ \text{ and }\ \ \ h_\theta(X,\nu)=h(\Gamma,\theta).
\end{align*}
Corollary \ref{corollary} thus follows by plugging the above
equalities in Theorem \ref{thm abramov}.

\section{Acknowledgements}

The authors are thankful to The Weizmann Institute of Science for the
excellent atmosphere during the preparation of this manuscript and to Uri
Bader, Itai Benjamini, Hillel Furstenberg, Elchanan Mossel and Omri
Sarig for valuable comments and suggestions.

\appendix

\section{A lemma on the Markov chain
  $\{Y_n\}_{n=1}^\infty$}\label{appendix probabilistic lemma}

Let $\{M_n\}_{n=1}^\infty$ be an irreducible time-independent Markov
chain in the finite state space $S$. For each $x\in S$, let
\begin{align*}
  \htime_x=\min\{n\ge 1\,;\,M_n=x\}
\end{align*}
be the hitting time for $x$ and let $\Prob{x}{\,\cdot\,}$ denote the
probability in $\{M_n\}_{n=1}^\infty$ given that $M_0=x$. The result
below is standard in the theory of Markov chains (see, e.g., the
chapter on finite Markov chains in~\cite{levin2009markov}).

\begin{lemma}\label{lemma markov avoidance}
  Under the above condition, there exists $C>0$ such that
  \begin{align*}
    \Prob{x}{\htime_y>n}\le e^{-Cn}
  \end{align*}
  for all $x,y\in S$ and $n\ge 1$.
\end{lemma}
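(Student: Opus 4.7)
The plan is to combine irreducibility of the chain with finiteness of $S$ to obtain a uniform lower bound on the probability of hitting any target state $y$ within a fixed number of steps from any starting state $x$, and then to iterate this bound in blocks via the time-homogeneous Markov property.

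Concretely, I would first argue that there exist an integer $N \ge 1$ and a constant $p \in (0,1]$ such that $\Prob{x}{\htime_y \le N} \ge p$ for every pair $x,y \in S$. By irreducibility, for each ordered pair $(x,y)$ there is some integer $n_{x,y} \ge 1$ for which the $n_{x,y}$-step transition probability from $x$ to $y$ is strictly positive. Finiteness of $S \times S$ then lets me take $N = \max_{x,y} n_{x,y}$ and $p$ to be the minimum over pairs of the probability of hitting $y$ from $x$ within $N$ steps. Next, I would iterate using the Markov property: conditioning on $\htime_y > kN$ and on the value of $M_{kN}$, the probability that $y$ is hit at some time $kN+1, \ldots, (k+1)N$ is at least $p$, so
\begin{equation*}
  \Prob{x}{\htime_y > (k+1)N \mid \htime_y > kN} \le 1 - p.
\end{equation*}
An easy induction gives $\Prob{x}{\htime_y > kN} \le (1-p)^k$, and monotonicity of the events $\{\htime_y > n\}$ upgrades this to $\Prob{x}{\htime_y > n} \le (1-p)^{\lfloor n/N \rfloor}$ for every $n \ge 1$, which is geometric decay of the required form after setting $C = -\log(1-p)/N$ (and absorbing the constant arising from the floor into the exponent by taking $C$ slightly smaller).

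The only step that genuinely needs care is establishing the uniform constants $N$ and $p$, and this is precisely where finiteness of $S$ is indispensable: on an infinite state space an irreducible chain can have arbitrarily slow hitting times and the conclusion can fail. Once the uniform hitting estimate is in hand, the block-iteration is a routine application of the time-homogeneous Markov property at time $kN$.
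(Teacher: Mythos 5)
Your argument is the standard one, and it is essentially the proof behind the reference the paper cites: the paper itself does not prove this lemma but quotes it from the finite Markov chain literature, where it is established exactly as you do, by a uniform hitting estimate (irreducibility plus finiteness of $S$ give an $N$ and a $p>0$ with $\Prob{x}{\htime_y\le N}\ge p$ for all $x,y$) followed by block iteration via the time-homogeneous Markov property at the times $kN$. Up to and including the bound $\Prob{x}{\htime_y>n}\le(1-p)^{\lfloor n/N\rfloor}$ your reasoning is correct, and you rightly isolate finiteness of $S$ as the essential hypothesis.

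The one step that does not work as written is the last cosmetic one: you cannot pass from $(1-p)^{\lfloor n/N\rfloor}$ to $e^{-Cn}$ for \emph{all} $n\ge 1$ by ``taking $C$ slightly smaller'', because for $n<N$ your bound is just $1$, while $e^{-Cn}<1$; and $\Prob{x}{\htime_y>n}$ can genuinely equal $1$ for small $n$. (This happens even in the paper's own setting: for the simple random walk on $\Z$ with $\Gamma=2\Z$, the projected chain on $\Gamma\backslash G\cong\Z/2\Z$ alternates deterministically, so the return time to the trivial coset is never $1$.) What your block argument actually yields is $\Prob{x}{\htime_y>n}\le K e^{-cn}$ with $K=(1-p)^{-1}$ and $c=-\log(1-p)/N$, equivalently the bound $e^{-Cn}$ for all $n$ beyond a fixed $n_0$. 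This is a defect of the lemma's literal statement (which is slightly too strong as written) rather than of your approach: the version with a multiplicative constant is what the standard reference proves, and it is all the paper needs, since the estimate is only used to show that the error term $R_n\le n e^{-Cn}H(\mu)$ (or $nKe^{-cn}H(\mu)$) tends to zero. If you want your write-up to be airtight, either state the conclusion with the prefactor $K$, or restrict the claim to $n$ sufficiently large.
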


The lemma used to bound the error term $R_n$ in the proof of Theorem
\ref{thm abramov} can now be proved. We henceforth assume the
conditions and use the notation of Sections~\ref{section hitting
  time=index} and~\ref{section abramov formula}.

\begin{lemma}\label{lemma probabilistic}
  There exists $C>0$ such that
  \begin{align*}
    \P{\htime>n\,|\,X_k=g}\le e^{-Cn}
  \end{align*}
for all $n,k\ge 1$ and $g\in G$.
\end{lemma}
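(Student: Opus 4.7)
The plan is to reduce the claim to Lemma \ref{lemma markov avoidance} applied to the projected chain $\{Y_n\}_{n=1}^\infty$ on $\Gamma\backslash G$. Since $[G:\Gamma]<\infty$, the state space is finite, and by Lemma \ref{lemma markov chain} the chain is irreducible and recurrent. Lemma \ref{lemma markov avoidance} therefore yields a constant $C>0$ such that $\Prob{y}{\htime_{\Gamma e}>m}\le e^{-Cm}$ for every $y\in\Gamma\backslash G$ and every $m\ge 1$. The useful identification is $\{\htime>n\}=\{Y_1,\ldots,Y_n\ne \Gamma e\}$, so every bound will be on an avoidance probability of the finite chain.

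I would split the argument into two cases. If $k>n$, then $X_k$ is independent of $X_1,\ldots,X_n$, hence independent of $\{\htime>n\}$, and $\P{\htime>n\mid X_k=g}=\P{\htime>n}=\Prob{\Gamma e}{\htime_{\Gamma e}>n}\le e^{-Cn}$. If $k\le n$, I would drop the constraint $Y_k\ne \Gamma e$ (doing so only enlarges the event) and set $B_1=\{Y_1,\ldots,Y_{k-1}\ne \Gamma e\}$ and $B_2=\{Y_{k+1},\ldots,Y_n\ne \Gamma e\}$. Since $B_1$ depends only on $X_1,\ldots,X_{k-1}$, independence gives $\P{B_1\mid X_k=g}=\P{B_1}\le e^{-C(k-1)}$. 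For $B_2$, note that given $Y_{k-1}$ and $X_k=g$ the value $Y_k=Y_{k-1}g$ is deterministic, and $\{Y_{k+j}\}_{j\ge 1}$ is a copy of $\{Y_n\}$ started at $Y_k$ and driven by the i.i.d.\ increments $X_{k+1},X_{k+2},\ldots$. Hence
\begin{align*}
\P{B_2\mid Y_{k-1},X_k=g}=\Prob{Y_{k-1}g}{\htime_{\Gamma e}>n-k}\le e^{-C(n-k)}
\end{align*}
uniformly in $Y_{k-1}$, and combining the two bounds via conditional expectation gives $\P{\htime>n\mid X_k=g}\le e^{-C(k-1)}\cdot e^{-C(n-k)}=e^{-C(n-1)}$. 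A mild adjustment of the constant then yields $e^{-C'n}$ valid for every $n\ge 1$, with the boundary case $n=1$ handled trivially by the fact that any probability is at most $1$.

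The only delicate point, and where I expect a careful reader to pause, is the conditional splitting: one needs the Markov structure of $\{Y_n\}$ to survive conditioning on a single increment $X_k$. The resolution is that the $X_j$'s are i.i.d., so fixing $X_k=g$ merely replaces one increment by a constant group element; the segments of the walk strictly before and strictly after time $k$ are unperturbed and remain genuine Markov-chain runs, so Lemma \ref{lemma markov avoidance} applies directly to each. The fact that the bound on the post-$k$ segment is uniform over all possible starting states $Y_{k-1}g$ is what lets the argument work without having to track the distribution of $Y_{k-1}$ conditional on $X_k=g$.
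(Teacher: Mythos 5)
Your argument is correct and rests on the same key ingredient as the paper's proof, namely Lemma~\ref{lemma markov avoidance} applied to the projected chain $\{Y_n\}_{n=1}^\infty$ on the finite quotient $\Gamma\backslash G$, but your decomposition is slightly different. The paper splits the time axis at $n/2$ and treats two cases ($k>n/2$ and $k\le n/2$), in each case exploiting the exponential decay of only one half of the trajectory (which is why it normalizes the avoidance bound as $e^{-2Cn}$); you split exactly at the conditioning time $k$, bound the pre-$k$ avoidance by independence of $X_k$ from $X_1,\dots,X_{k-1}$ and the post-$k$ avoidance by the Markov property, uniformly over the deterministic state $Y_{k-1}g$, and multiply the two bounds to get $e^{-C(k-1)}\cdot e^{-C(n-k)}=e^{-C(n-1)}$. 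This is marginally sharper (no factor-of-two loss in the exponent) and collapses the case analysis to the trivial range $k>n$; your identification of the delicate point --- that conditioning on a single increment leaves the segments before and after time $k$ genuine chain runs --- is exactly the justification needed. One caveat: the closing remark that $n=1$ is handled ``trivially because any probability is at most $1$'' does not produce a bound of the form $e^{-C'}<1$; in fact no constant can, since $\P{\htime>1\,|\,X_1=g}=1$ for any $g\in\supp(\mu)\setminus\Gamma$ (such $g$ exists when $\Gamma\neq G$ because $\mu$ is generating). This boundary defect is shared by the lemma as stated, and by the paper's own Case 1 at $n=1$, and it is immaterial to the application, where only the decay for large $n$ enters the bound on $R_n$; the clean fix is to state the estimate as $Ke^{-Cn}$ (or for $n\ge n_0$) rather than to appeal to the trivial bound by $1$.
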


\begin{proof}
  Apply the previous lemma to the irreducible time-independent Markov
  chain $\{Y_n\}_{n=1}^\infty$ in the finite set $\Gamma\backslash G$
  as in Section \ref{section hitting time=index}, defined by the
  projection of the Markov chain $\{Z_n\}_{n=1}^\infty$ in $G$, to get
  $C>0$ such that
  \begin{align*}
    \Psub{g}{\htime > n} \leq e^{-2Cn}
  \end{align*}
  for $n \geq 1$ and $g \in G$, where $\mathbb{P}_g$ denotes the
  measure of $\mu$-random walks starting at $g$. We divide the proof into two cases.\\

  \noindent{\bf Case 1:} $k>n/2$. Because the event $\{\htime >n/2\}$
  is independent of $X_k$, we have
  \begin{align*}
    \P{\htime>n\,|\,X_k=g}
    &\le \P{\htime>n/2\,|\,X_k=g}\\
    &=\P{\htime>n/2}\\
    &\le e^{-Cn}.
  \end{align*}

  \noindent{\bf Case 2:} $k\le n/2$. By the law of conditional probabilities,
  $\P{\htime>n\,|\,X_k=g}$ is equal to
  \begin{align*}
  \sum_{\Gamma g_1\in\Gamma\backslash G}
  \P{\htime>n\,|\,Y_{n/2}=\Gamma g_1,X_k=g} \cdot
  \P{Y_{n/2}=\Gamma g_1\,|\,X_k=g}.
  \end{align*}
  We condition the first term on $\htime > n/2$, to get
  \begin{align*}
    \P{\htime>n\,|\,Y_{n/2}=\Gamma g_1,X_k=g} &\leq
    \P{\htime>n\,|\,\htime > n/2,Y_{n/2}=\Gamma g_1,X_k=g}\\
    &= \P{\htime>n\,|\,\htime > n/2,Y_{n/2}=\Gamma g_1}\\
    &= \Psub{g_1}{\htime > n/2},
  \end{align*}
  where the first equality follows from the Markov property. Hence
  \begin{align*}
    \P{\htime>n\,|\,X_k=g}& \le \sum_{\Gamma g_1\in\Gamma\backslash G}
    \Psub{g_1}{\htime>n/2}\cdot\P{Y_{n/2}=\Gamma g_1\,|\,X_k=g}\\
    &\le e^{-Cn}\sum_{\Gamma g_1\in\Gamma\backslash G}\P{Y_{n/2}=\Gamma g_1\,|\,X_k=g}\\
    &= e^{-Cn},
  \end{align*}
thus completing the proof of the lemma.
\end{proof}

\bibliography{abramov_formula}
\end{document}